\newtheorem{theorem}{Theorem}
\newtheorem{lemma}{Lemma}[theorem]
\newtheorem{proposition}{Proposition}
\newtheorem{claim}{Claim}
\title{The Existence and Uniqueness of a Nash Equilibrium in Mean Field Game Theory}
\author{Daniel Block, Moises Reyes Rivas}
\date{July 28, 2022}
\begin{document}
\maketitle

\tableofcontents
\section*{Acknowledgments}
This work arises from the David Harold Blackwell Summer Research Institute conducted at the University of California, Los Angeles during the summer of 2022. Special appreciation to Dr. Wilfrid Gangbo of the UCLA Department of Mathematics for the generous investment of their time and patience.
\section{Introduction}

In recent and past works, convexity is usually assumed on each individual part of the action functional in order to demonstrate the existence and uniqueness of a Nash equilibrium on some interval $[0,T]$ (this meant that each hessian was assumed to be nonnegative). Particularly, a certain assumption was imposed in order to quantify the smallness of $T$. The contribution of this project is to expand on this with the key insight being that one does not need the convexity of each part of the action, but rather just an appropriate combination of them, which will essentially ``compensate'' for the other two terms to yield convexity in the action.

\smallskip

This is meaningful in both the pure and applied settings as it generalizes the existence and uniqueness of a Nash equilibrium slightly more, but maybe more importantly matches real-world application slightly closer, as in reality there are many settings in which not each part of the action have convexity. Thus, it is more accurate for modern application of Mean Field Game Theory.
\section{Initial Assumptions}

We have Lagrangian $L$ and Hamiltonian $H$ where $L,H \in C^{3}(\mathbb{R}^{2d})$ which represent one individual player's cost at a specific time using either their velocity (control), or momentum, respectively. $L$ and $H$ are such that 
\begin{equation*}
    L^{*}(q, \cdot) = H(q, \cdot) \quad \text{and} \quad H^{*}(q, \cdot) = L(q, \cdot)
\end{equation*}

where $f^{*}$ denotes the Legendre transform of $f$. We assume $L \geq 0$ and that there exist $c_0,c_1 > 0$ such that
\begin{equation*}
    c_0|v|^2 \leq L(q,v) \quad \text{and} \quad |\nabla_{q} L(q,v)| \leq c_1.
\end{equation*}

We also assume that for some interpolation (convex combination of two variables) of $L$, that the second derivative is bounded: that is, for some $0 \leq \varepsilon \leq 1$, we have some $\lambda_0 > 0$ and $q_0,q_1,v_0,v_1 \in \mathbb{R}^d$,
\begin{equation*}
    \lambda_0|v_1 - v_0|^2 \leq \frac{d^2}{d\varepsilon^2}L\left( (1 - \varepsilon)q_0 + \varepsilon q_1, (1 - \varepsilon)v_0 + \varepsilon v_1 \right).
\end{equation*}

Additionally, we have \emph{interaction} functions $\Phi, \Psi \in C^{3}(\mathbb{R}^d)$ that model the interaction between players labelled $\omega$ for $ \omega \in \Omega = (0,1)$ either at some random time or at initial time, respectively. We assume that both $\Phi$ and $\Psi$ are even, as well as being bounded from below with bounded first, second and third derivatives: let $c_{\Phi}$ and $c_{\Psi}$ be such that
\begin{equation*}
    c_{\Phi}I_d \leq \nabla^{2}\Phi \quad \text{and} \quad c_{\Psi}I_d \leq \nabla^{2}\Psi.
\end{equation*}

Denote $c^{+}$ and $c^{-}$ as the positive and negative parts of a real number $c$, i.e.,
\begin{align*}
    c^{+} &\coloneqq \max\{c,0\} \\
    c^{-} &\coloneqq \max\{-c,0\} \\
    c &= c^{+} - c^{-}.
\end{align*}

We suppose that for terminal time $T > 0$, 
\begin{equation*}
    T^2 \left(c^{-}_{\Phi} + 2\|\nabla^2 \Phi\|_{L^\infty}\right) + T \left(c^{-}_{\Psi} + 2\|\nabla^2 \Psi\|_{L^\infty}\right) < \frac{\lambda_0}{2}
\end{equation*}

Now set $\mathbb{H} = L^2\left((0,1), \mathcal{L}^1_{(0,1)}; \mathbb{R}^d\right)$ (the space of square-integrable, Lebesgue measurable functions mapping from $(0,1) \to \mathbb{R}^d$) and set $\Omega = (0,1)$. We can now define initial cost functional $\mathcal{U}_{*} : \mathbb{H} \to \mathbb{R}$ and running cost functionals, $\hat{L}, \hat{H} : \mathbb{H}^2 \to \mathbb{R}$ which take in all player trajectories $X$ and velocities $V$ (or momentums $P$) and give a cost at a specific time:
\begin{align*}
    \mathcal{U}_{*} (X) &= \frac{1}{2} \int_{\Omega^2} \Psi(X(\omega) - X(\tilde{\omega})) \, d\omega d\tilde{\omega} \\
    \hat{L}(X,V) &= \int_{\Omega} L(X(\omega), V(\omega)) \, d\omega + \frac{1}{2} \int_{\Omega^2} \Phi(X(\omega) - X(\tilde{\omega})) \, d\omega d\tilde{\omega} \\
    \hat{H}(X,P) &= \int_{\Omega} H(X(\omega), P(\omega)) \, d\omega - \frac{1}{2} \int_{\Omega^2} \Phi(X(\omega) - X(\tilde{\omega})) \, d\omega d\tilde{\omega}
\end{align*}

In other words,
\begin{equation*}
    \hat{L}(X,V) = \underset{\text{Running Cost over all players $\omega \in \Omega$}}{\underbrace{\int_{\Omega} L(X(\omega), V(\omega)) \, d\omega}} + \underset{\text{Interaction between any two players $\omega$, $\tilde{\omega} \in \Omega$}}{\underbrace{\frac{1}{2}\int_{\Omega^2} \Phi(X(\omega) - X(\tilde{\omega})) \, d\omega d\tilde{\omega}}}
\end{equation*}

And thus, we can consider the action $\hat{\mathcal{A}}_0^{T} : AC_2 (0,T; \mathbb{H}) \to \mathbb{R}$, which gives us the total cost for all players across time $[0,T]$,
\begin{equation*}
    \hat{\mathcal{A}}_0^{T}(\gamma) = \int_0^T \hat{L}(\gamma(t, \cdot), \dot{\gamma}(t, \cdot)) \, dt
\end{equation*}

where $AC_2(0,T;\mathbb{H})$ is the set of all $\gamma : [0,T] \to \mathbb{H}$ such that
\begin{equation*}
    \int_0^T \| \gamma(t, \cdot) \|_{\mathbb{H}}^2 \, dt < \infty
\end{equation*}

in which $\| \gamma(t, \cdot) \|_{\mathbb{H}} = \sqrt{\int_{\Omega} |\gamma(t, \cdot)|^2 \, d\omega}$.
\section{A Unique Nash Equilibrium}

For some player $q \in \mathbb{R}^d$, let $A$ denote their particular control (velocity), $\alpha^A$ denote the collective controls of all players (given that $A$ is the control of player $q$), $X^{\alpha^{A}}$ denote the collective trajectories of the collective controls such that $X_*$ denotes the final position (the official ranking of players), and let $q = X_*(\omega_0)$ for $\omega_0 \in \Omega$. We calculate the \emph{cost} of player $q$ via $J$ defined below:
\begin{multline*}
    J[T,q,A,\alpha^A] = \int_0^T L\left(X^{\alpha^{A}}(t, \omega_0), \alpha^{A} (t,\omega_0) \right) \, dt + \int_0^T\int_{\Omega} \Phi\left( X^{\alpha^{A}}(t, \omega_0) - X^{\alpha^{A}}(t, \omega) \right) \, d\omega dt \\
    + \int_{\Omega} \Psi\left( q - X^{\alpha^{A}}(0,\omega) \right) \, d\omega
\end{multline*}

We say that a collective control $\alpha$ is a \emph{Nash Equilibrium} if
\begin{equation*}
    J[T,q,\alpha(\cdot, \omega_0), \alpha] \leq J[T,q, A, \alpha] \quad \forall q \in \mathbb{R}^d, \forall A : [0,T] \to \mathbb{R}^d \; \mathcal{L}^1\text{--measurable}
\end{equation*}

That is, if any player $q$ switches from control $\alpha$ to $A$, it increases their cost. Our goal is to effectively show that given a Mean Field Game satisfying our initial assumptions, we can guarantee the existence and uniqueness of a Nash Equilibrium. To do so we utilize the following theorem:

\begin{theorem}
    \label{primary_thm}
    Suppose $u : [0,T] \times \mathbb{R}^d \to \mathbb{R}$, $X: [0,T] \times \Omega \to \mathbb{R}^d$ are of class $C^1$, $\nabla_{qq}u$ is bounded and
    \begin{equation*}
        \begin{dcases}
            \partial_t u(t,q) + H(q, \nabla_q u(t,q)) - \int_{\Omega} \Phi(q - X(t,\omega)) \, d\omega = 0 \\
            \dot{X}(t,\omega) = \nabla_p H(X(t,\omega), \nabla_q u(t, X(t,\omega))) \\
            u(0,q) = \int_{\Omega} \Psi(q - X(0, \omega)) \, d\omega \\
            X(T, \omega) = X_{*}(\omega) \quad (X_{*} \in \mathbb{H} \text{ is prescribed})
        \end{dcases}
    \end{equation*}
    
    If $X_{*}$ is non-atomic, then the game admits a unique Nash equilibrium.
\end{theorem}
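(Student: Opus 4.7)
The plan is to split the argument into two pieces: an \emph{existence} half that promotes the given $(u,X)$ to a Nash equilibrium by a verification argument based on the HJB equation, and a \emph{uniqueness} half that exploits the smallness hypothesis on $T$ to establish strict convexity of the total action functional. For existence, I would set $\alpha(t,\omega) := \nabla_p H\bigl(X(t,\omega),\nabla_q u(t,X(t,\omega))\bigr)$ and verify the Nash inequality directly. Fix a player $\omega_0$ with label $q = X_*(\omega_0) = X(T,\omega_0)$ and any measurable alternative control $A$; the resulting trajectory $Y$ satisfies $\dot Y = A$ and $Y(T)=q$. Differentiating $u(t,Y(t))$ in $t$ and using the HJB equation,
\[\frac{d}{dt} u(t,Y(t)) = -H(Y,\nabla_q u) + \int_\Omega \Phi(Y - X(t,\tilde{\omega}))\,d\tilde{\omega} + \nabla_q u \cdot A.\]
The Fenchel--Young inequality $\nabla_q u \cdot A \leq H(Y,\nabla_q u) + L(Y,A)$, with equality iff $A = \nabla_p H(Y,\nabla_q u)$, yields $\frac{d}{dt}u(t,Y(t)) \leq L(Y,A) + \int \Phi(Y-X(t,\tilde{\omega}))\,d\tilde{\omega}$. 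Integrating on $[0,T]$ and invoking the initial condition $u(0,\cdot) = \int \Psi(\cdot - X(0,\tilde{\omega}))\,d\tilde{\omega}$ gives $u(T,q) \leq J[T,q,A,\alpha^A]$, with equality precisely when $Y \equiv X(\cdot,\omega_0)$; hence $\alpha$ realizes the player-wise minima of $J$ and is a Nash equilibrium.

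For uniqueness, the game is of potential type: define
\[\mathcal{E}(\gamma) := \hat{\mathcal{A}}_0^T(\gamma) + \mathcal{U}_*(\gamma(0,\cdot)) \quad \text{for } \gamma \in AC_2(0,T;\mathbb{H}) \text{ with } \gamma(T,\cdot) = X_*.\]
Since $\Phi,\Psi$ are even and enter with factor $\tfrac12$, the first variation of $\mathcal{E}$ in a direction supported on a single player's trajectory reproduces exactly the Euler--Lagrange equations for $J$; thus $\gamma$ is a Nash equilibrium iff it is a critical point of $\mathcal{E}$, and strict convexity of $\mathcal{E}$ on the constraint set forces uniqueness. To establish strict convexity, I would take $\gamma_\varepsilon = (1-\varepsilon)\gamma^0 + \varepsilon \gamma^1$ and $W := \gamma^1 - \gamma^0$ (so $W(T,\cdot) \equiv 0$), and compute
\[\frac{d^2}{d\varepsilon^2}\mathcal{E}(\gamma_\varepsilon) \geq \lambda_0 \int_0^T \|\dot W(t)\|_{\mathbb{H}}^2\,dt - \bigl(c_\Phi^- + 2\|\nabla^2\Phi\|_{L^\infty}\bigr)\int_0^T \|W(t)\|_{\mathbb{H}}^2\,dt - \bigl(c_\Psi^- + 2\|\nabla^2\Psi\|_{L^\infty}\bigr)\|W(0)\|_{\mathbb{H}}^2.\]
The $\lambda_0$ term comes from the convexity assumption on $L$; the two negative corrections absorb the lower bounds $\nabla^2\Phi \geq -c_\Phi^- I_d$ (and analogously for $\Psi$) together with the pointwise inequality $|W(t,\omega) - W(t,\tilde{\omega})|^2 \leq 2(|W(t,\omega)|^2 + |W(t,\tilde{\omega})|^2)$, which is where the factor of $2$ in the hypothesis originates. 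Finally, the Poincaré-type bounds $\|W(t)\|_{\mathbb{H}}^2 \leq T \int_0^T \|\dot W\|_{\mathbb{H}}^2\,dt$ and $\|W(0)\|_{\mathbb{H}}^2 \leq T\int_0^T \|\dot W\|_{\mathbb{H}}^2\,dt$, following from $W(t,\omega) = -\int_t^T \dot W(s,\omega)\,ds$ and Cauchy--Schwarz, collapse the estimate to
\[\frac{d^2}{d\varepsilon^2}\mathcal{E}(\gamma_\varepsilon) \geq \Bigl(\lambda_0 - T^2(c_\Phi^- + 2\|\nabla^2\Phi\|_{L^\infty}) - T(c_\Psi^- + 2\|\nabla^2\Psi\|_{L^\infty})\Bigr)\int_0^T \|\dot W\|_{\mathbb{H}}^2\,dt > \tfrac{\lambda_0}{2}\int_0^T \|\dot W\|_{\mathbb{H}}^2\,dt,\]
by the hypothesis on $T$, giving strict convexity and hence uniqueness.

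The hard part will be the bookkeeping in the second-variation computation, since the interaction terms are indexed by \emph{pairs} of players; controlling them requires passing from $\nabla^2\Phi$ and $\nabla^2\Psi$ applied to the pairwise difference $W(t,\omega) - W(t,\tilde{\omega})$ back to the $\mathbb{H}$-norm of $W(t,\cdot)$ uniformly in $t$, and then trading $\|W\|_{\mathbb{H}}^2$ against $\|\dot W\|_{\mathbb{H}}^2$ using the terminal constraint $W(T,\cdot) \equiv 0$. The factor of $2$ sitting in front of $\|\nabla^2\Phi\|_{L^\infty}$ and $\|\nabla^2\Psi\|_{L^\infty}$ in the smallness hypothesis is exactly the slack provided by that pairwise inequality, and the $\lambda_0/2$ on the right-hand side leaves explicit room to spare. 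The non-atomicity of $X_*$ enters more subtly, ensuring that pointwise-in-$\omega$ perturbations genuinely lift to $\mathbb{H}$-valued variations of $\gamma$, so that the single-player Nash conditions faithfully characterize critical points of $\mathcal{E}$ rather than producing a measure-zero obstruction.
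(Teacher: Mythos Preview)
Your existence argument is the paper's own: differentiate $u(t,Y(t))$ along a deviation, substitute the HJB equation, apply Fenchel--Young, integrate. One detail you pass over that the paper spells out: the interaction terms in $J[T,q,A,\alpha^A]$ may be computed against the undeviated field $X(t,\tilde\omega)$ (rather than $X^{\alpha^A}$) precisely because $X_*$ is non-atomic, so $\{\tilde\omega : X_*(\tilde\omega)=X_*(\omega_0)\}$ is null. That, not the lift-of-perturbations picture you sketch at the end, is the role of non-atomicity in this theorem.

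Your uniqueness half takes a genuinely different route and overlooks the intended use of one hypothesis. The paper does not touch the potential $\mathcal{E}$ or the smallness-of-$T$ condition inside Theorem~1 at all. Instead it observes that equality in the verification step forces the deviating trajectory to satisfy $\dot Y=\nabla_p H\bigl(Y,\nabla_q u(t,Y)\bigr)$ with $Y(T)=X_*(\omega_0)$, and then uses the hypothesis that $\nabla_{qq}u$ is bounded to make the vector field Lipschitz in $Y$; Picard iteration then gives a unique solution, hence a unique best response, hence the constructed $\alpha=\dot X$ is the unique equilibrium. This is a two-line argument and is the \emph{only} place the boundedness of $\nabla_{qq}u$ is invoked---a hypothesis your proposal never uses.

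Your second-variation estimate on $\mathcal{E}$ is correct and is essentially the computation the paper carries out later (its Theorem~2) to get uniqueness of the minimizer of $\hat U$. But deploying it here for Nash uniqueness requires the bridge ``$\gamma$ is a Nash equilibrium $\Rightarrow$ $\gamma$ is a critical point of $\mathcal{E}$,'' and that forward implication is where your argument is thin: a Nash equilibrium is defined by a pointwise-in-$\omega$ infimum over merely measurable controls, and you would need to argue that any Nash profile is regular enough that each player's trajectory satisfies its Euler--Lagrange equation and that these assemble into the Euler--Lagrange system for $\mathcal{E}$. That is plausible for this potential game but is genuine additional work, which the paper's ODE argument sidesteps entirely.
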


\begin{proof}
    Fix $q \in \mathbb{R}^d, \omega_0 \in \Omega$ such that $q = X_{*}(\omega_0)$. Set
    \begin{equation*}
        \alpha(t,\omega) = \dot{X}(t, \omega) \quad \forall t \in (0,T), \forall \omega \in \Omega
    \end{equation*}
    
    Assume that player $q$ changes their control to $A: [0,T] \to \mathbb{R}^d$ while all other players keep their control unchanged. Set
    \begin{align*}
        \alpha^{A}(t,\omega) &= 
        \begin{dcases}
            A(t) & \text{if } X_{*}(\omega) = X_{*}(\omega_0) \\
            \alpha(t,\omega) & \text{if } X_{*}(\omega) \neq X_{*}(\omega_0)
        \end{dcases} \\
        \Omega_{\omega_0} &= \{\omega \in \Omega : X_{*}(\omega) = X_{*}(\omega_0)\}
    \end{align*}
    
    Note that
    \begin{equation*}
        \int_{\Omega} \Phi\left( q - X^{\alpha^{A}}(t, \omega) \right) \, d\omega = \int_{\Omega_{\omega_0}} \Phi\left( q - X^{\alpha^{A}}(t, \omega) \right) \, d\omega + \int_{\Omega_{\omega_0}^{C}} \Phi\left( q - X^{\alpha^{A}}(t, \omega) \right) \, d\omega
    \end{equation*}
    
    As the first integral is over a set of null measure ($\mathcal{L}^{1}(\Omega_{\omega_0}) = 0$), the integral equates to 0 and therefore
    \begin{align*}
        \int_{\Omega} \Phi\left( q - X^{\alpha^{A}}(t, \omega) \right) \, d\omega &= \int_{\Omega_{\omega_0}^{C}} \Phi\left( q - X^{\alpha^{A}}(t, \omega) \right) \, d\omega \\
        &= \int_{\Omega} \Phi\left( q - X^{\alpha}(t, \omega) \right) \, d\omega
    \end{align*}
    
    \begin{claim}
        \label{primary_thm_claim_1}
        Let $I(t) = u\left(t, X^{\alpha^{A}}(t, \omega_0)\right) - J[t,q,A,\alpha^{A}]$. We claim that
        \begin{equation*}
            \frac{d}{dt} I(t) < \frac{d}{dt} \left[ u(t,X^{\alpha}(t,\omega_0)) - J[t,q, \alpha(\cdot,\omega_0), \alpha] \right]
        \end{equation*}
        
        unless $A = \alpha(\cdot,\omega_0)$.
    \end{claim}
    \begin{proof}[Proof of Claim \ref{primary_thm_claim_1}]
        Directly computing,
        \begin{align*}
            \dot{I}(t) &= \partial_t u\left(t, X^{\alpha^{A}}(t, \omega_0)\right) + \left\langle \nabla_q u\left(t,X^{\alpha^{A}}(t, \omega_0)\right), \dot{X}^{\alpha^{A}}(t,\omega_0) \right\rangle\\ 
            &- L\left(X^{\alpha^{A}}(t,\omega_0), \dot{X}^{\alpha^{A}}(t,\omega_0)\right) - \int_{\Omega} \Phi\left( X^{\alpha^{A}}(t, \omega_0) - X^{\alpha^{A}}(t, \omega) \right) \, d\omega \\
            &= -H\left(X^{\alpha^{A}}(t,\omega_0), \nabla_q u\left(t, X^{\alpha^{A}}(t,\omega_0)\right)\right) + \int_{\Omega} \Phi \left( X^{\alpha^{A}}(t\omega_0) - X^{\alpha}(t,\omega) \right) \, d\omega \\
            &+ \left\langle \nabla_q u\left(t,X^{\alpha^{A}}(t, \omega_0)\right), \dot{X}^{\alpha^{A}}(t,\omega_0) \right\rangle - L\left(X^{\alpha^{A}}(t,\omega_0), \dot{X}^{\alpha^{A}}(t,\omega_0)\right) \\
            &- \int_{\Omega} \Phi\left( X^{\alpha^{A}}(t, \omega_0) - X^{\alpha}(t, \omega) \right) \, d\omega \\
            &\leq 0
        \end{align*}
        
        We have that the inequality is strict unless
        \begin{equation*}
            \begin{dcases}
                    \dot{X}^{\alpha^{A}}(t,\omega_0) = \nabla_q u\left(t, X^{\alpha^{A}} (t,\omega_0)  \right) \\
                    X^{\alpha^{A}}(T,\omega_0) = X_{*}(\omega_0)
            \end{dcases}
        \end{equation*}
        
        This implies  that $X^{\alpha^{A}}(t, \omega_0) = X^{\alpha}(t,\omega_0) \; \forall t \in [0,T] \implies A(t) = \dot{X}(t, \omega_0)$. This proves the claim.
    \end{proof}
    
    \begin{claim}
        \label{primary_thm_claim_2}
        Suppose $\nabla_{qq} u(t, \cdot)$ is bounded. We claim that
        \begin{equation*}
            \begin{dcases}
                    \dot{Y}(t) = \nabla_q H(Y(t), \nabla_q u(t, Y(t))) \\
                    Y(t) = X_{*}
            \end{dcases}
        \end{equation*}
        
        admits a unique solution.
    \end{claim}
    \begin{proof}[Proof of Claim \ref{primary_thm_claim_2}]
        Set $F(t,y) = \nabla_p H(y, \nabla_q u(t,y))$. Then
        \begin{align*}
            F(t,y_1) - F(t, y_0) &= \int_0^1 \frac{d}{ds} F(t, y_0 + s(y_1 - y_0)) \, ds \\ 
            &= \int_0^1 \langle \nabla_y F(t, y_0 + s(y_1 - y_0)), y_1 - y_0 \rangle \, ds
        \end{align*}
        
        Via the Chain Rule,
        \begin{equation*}
            \nabla_y F(t,y) = \nabla_{qp} H(y, \nabla_q u(t,y)) + \nabla_{pp}H(y,\nabla_q u(t,y)) \nabla_{qq} u(t,y)
        \end{equation*}
        
        And as $\nabla_{qq} u(t, \cdot)$ is bounded, we conclude $\nabla_y F(t,y)$ is bounded, which (it is easy to show that this) implies $F$ is Lipschitz. Hence to solve the system, we must solve 
        \begin{equation*}
            y(t) = X_{*} - \int_t^T F(s, y(s)) \, ds
        \end{equation*}
        
        Using Picard's Method, we define inductively 
        \begin{align*}
            y^{n + 1}(t) &= X_{*} - \int_t^T F(s, y^{n}(s)) \, ds \\
            y^{0}(s) &= X_* \quad \forall s
        \end{align*}
        
        It follows that $F(s, \cdot)$ Lipschitz $\implies (y^n)_n$ converges to a unique limit.
    \end{proof}
    
    Thus, integration on Claim \ref{primary_thm_claim_1} yields
    \begin{multline*}
        u\left( T, X^{\alpha^{A}}(T, \omega_0) \right) - J\left[T,q,A,\alpha^{A}\right] - u\left( 0, X^{\alpha^{A}}(0, \omega_0) \right) + J\left[0,q,A,\alpha^{A}\right] \\
        < u(T, X^{\alpha}(T, \omega_0)) - J\left[T,q, \alpha(\cdot,\omega_0), \alpha\right] - u(0, X^{\alpha}(0,\omega_0)) + J\left[0,q,A,\alpha^{A}\right]
    \end{multline*}
    
    unless $A = \alpha(\cdot,\omega_0)$. Cancellation gives us
    \begin{align*}
        -J\left[T,q,A,\alpha^{A}\right] &< -J[T,q,\alpha(\cdot,\omega_0), \alpha] \\
        J[T,q,\alpha(\cdot,\omega_0), \alpha] &< J\left[T,q,A,\alpha^{A}\right]
    \end{align*}
    
    unless $A = \alpha(\cdot,\omega_0)$, which gives us the existence of the Nash Equilibrium. Immediately from Claim \ref{primary_thm_claim_2} we conclude that the Nash Equilibrium is indeed unique.
\end{proof}

Hence, if we can demonstrate that the conditions of the system hold, we can effectively conclude the existence and uniqueness of a Nash Equilibrium. For the following sections we proceed assuming that $X \in L^{2} ((0,1); \mathbb{R}^d) \cap C^{1} ([0,1]; \mathbb{R}^d)$ (the range of $X$ is $\mathbb{R}^d$), and that $X$ is non-atomic ($X^{-1}(q)$ is a set of null measure in $\Omega$ for any $q \in \mathbb{R}^d$). 
\section{The Existence and Uniqueness of Optimal Cost}

\subsection{Existence and Uniqueness}
Consider the following function below:
\begin{equation*}
    \mathcal{\hat{B}}_0^{T}(\gamma) = \mathcal{\hat{A}}_0^{T}(\gamma) + \mathcal{U}_{*}(\gamma(0))
\end{equation*}

$\mathcal{\hat{B}}_0^T$ effectively calculates the total cost across all (finite) time. Our goal is to minimize such a cost, or rather, to show that a minimizer exists, i.e. prove the existence of minimizer $\gamma$ to obtain an optimal cost function.

\begin{theorem}
    \label{exist_uhat_thm}
    Given $\hat{\mathcal{B}}_0^T$ defined above, we have that a minimizer exists: that is, we have a minimum $\hat{U}$ where:
    \begin{equation*}
        \hat{U}(T, X) = \inf_{\gamma} \left\{ \mathcal{\hat{A}}_0^{T}(\gamma) + \mathcal{U}_{*}(\gamma(0)) : \gamma \in AC_2(0,T; \mathbb{H}), \gamma(T) = X \right\} 
    \end{equation*}
    
    Where $\gamma = \gamma_T^t[X]$ denotes the minimizer of $\hat{\mathcal{B}}_0^T$.
\end{theorem}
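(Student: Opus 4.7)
The plan is to apply the direct method of the calculus of variations on the affine set $\mathcal{X} = \{\gamma \in AC_2(0,T;\mathbb{H}) : \gamma(T) = X\}$. First I would establish coercivity: the bound $L(q,v) \geq c_0 |v|^2$ together with the lower bounds on $\Phi, \Psi$ give
\[
    \hat{\mathcal{B}}_0^T(\gamma) \geq c_0 \int_0^T \|\dot\gamma(t)\|_{\mathbb{H}}^2 \, dt - C,
\]
so that every minimizing sequence $(\gamma_n) \subset \mathcal{X}$ has $\|\dot\gamma_n\|_{L^2(0,T;\mathbb{H})}$ uniformly bounded. Because $\gamma_n(T) = X$ is fixed and $\gamma_n(t) = X - \int_t^T \dot\gamma_n(s)\,ds$, the Cauchy--Schwarz inequality yields $\sup_{t \in [0,T]} \|\gamma_n(t)\|_{\mathbb{H}} \leq C'$. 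By Banach--Alaoglu and reflexivity of $L^2(0,T;\mathbb{H})$, I would extract a subsequence with $\dot\gamma_n \rightharpoonup v$ weakly, and define $\gamma(t) := X - \int_t^T v(s)\,ds \in \mathcal{X}$, so that $\gamma_n(t) \rightharpoonup \gamma(t)$ weakly in $\mathbb{H}$ for every $t$.

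Second, I would establish weak lower semicontinuity of $\hat{\mathcal{B}}_0^T$ along this subsequence. For the running cost $\int_0^T\!\int_\Omega L(\gamma_n,\dot\gamma_n)\,d\omega\,dt$, the convexity of $L(q,\cdot)$ (recovered from the interpolation hypothesis by specialising $q_0 = q_1$), combined with a standard lsc result for convex integral functionals, yields semicontinuity. For the interaction terms, the bound $\nabla^2 \Phi \geq c_\Phi I_d$ allows the decomposition $\Phi = \Phi_{\mathrm{cvx}} + \tfrac{c_\Phi}{2}|\cdot|^2$ with $\Phi_{\mathrm{cvx}}$ convex, and similarly for $\Psi$. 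The convex pieces are weakly lsc; the quadratic corrections can fail to be weakly continuous, but their potentially negative contributions are bounded by
\[
    c_\Phi^- \int_{\Omega^2} |\gamma_n(\omega)-\gamma_n(\tilde\omega)|^2 \, d\omega\, d\tilde\omega \leq 2\, c_\Phi^- \|\gamma_n\|_{\mathbb{H}}^2,
\]
and analogously for $c_\Psi^-$. Combining with the estimate $\|\gamma_n(t)-X\|_{\mathbb{H}}^2 \leq (T-t)\|\dot\gamma_n\|_{L^2(0,T;\mathbb{H})}^2$, the smallness hypothesis on $T$ is exactly what is needed to absorb those negative contributions into the $\lambda_0$-coercive piece of $L$, giving overall weak lsc. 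Passing to the $\liminf$ along the minimizing sequence, we obtain $\hat{\mathcal{B}}_0^T(\gamma) \leq \liminf_n \hat{\mathcal{B}}_0^T(\gamma_n) = \hat{U}(T,X)$, so $\gamma$ attains the infimum.

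The hard part will be the weak lsc of the nonlinear interaction terms: $\int_{\Omega^2} \Phi(\gamma_n(\omega)-\gamma_n(\tilde\omega))\,d\omega\,d\tilde\omega$ is genuinely nonlinear and in general oscillations in $\gamma_n$ can survive inside $\Phi$. The resolution is the convex-plus-quadratic decomposition above together with the smallness assumption on $T$, which allows the strictly convex $\lambda_0$-piece of the Lagrangian to dominate any non-convex contribution arising from $\Phi$ and $\Psi$. This is precisely the \emph{appropriate combination of convexities} advertised in the introduction, and it is what makes the direct method applicable in this setting despite neither $\Phi$ nor $\Psi$ being assumed convex on its own.
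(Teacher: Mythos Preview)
Your outline follows the same direct-method skeleton as the paper (coercivity, weak compactness, weak lower semicontinuity, existence), and the coercivity step is essentially identical. The substantive divergence is in how weak lower semicontinuity is obtained. The paper does not argue term by term; it proves that the \emph{entire} functional $\hat{\mathcal{B}}_0^T$ is convex on the affine set $\{\gamma(T)=X\}$ by computing $f''(\varepsilon)$ for $f(\varepsilon)=\hat{\mathcal{B}}_0^T\bigl((1-\varepsilon)\gamma+\varepsilon\bar\gamma\bigr)$ and showing $f''\geq 0$. In that computation the $\Phi$ and $\Psi$ Hessians are split into diagonal and cross pieces, the cross pieces are bounded via $\|\nabla^2\Phi\|_{L^\infty},\|\nabla^2\Psi\|_{L^\infty}$, and two applications of Poincar\'e convert the $\lambda_0\int\|\dot{\bar\gamma}-\dot\gamma\|^2$ excess into control of $\int\|\bar\gamma-\gamma\|^2$ and $\|\bar\gamma(0)-\gamma(0)\|^2$; the smallness hypothesis on $T$ then closes the inequality. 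Once global convexity is in hand, weak lsc is immediate (Mazur), and the same strict inequality on $f''$ gives uniqueness of the minimizer in one stroke. Your convex-plus-quadratic decomposition is morally equivalent and in fact yields a sharper constant (your deficit is governed by $c_\Phi^-,c_\Psi^-$ alone, without the $\|\nabla^2\Phi\|_{L^\infty}$ terms), but it trades the single clean second-variation computation for several separate lsc arguments that must then be recombined.

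Two points need tightening. First, invoking only convexity of $L(q,\cdot)$ in $v$ is not enough here: $\mathbb{H}=L^2(\Omega;\mathbb{R}^d)$ is infinite-dimensional, so the $\dot\gamma_n$ bound does not upgrade $\gamma_n(t)\rightharpoonup\gamma(t)$ to strong convergence, and the standard Tonelli/Ioffe lsc theorem (convex in $v$, strong in $q$) does not apply. The interpolation hypothesis actually gives \emph{joint} convexity of $L$ (the second derivative along any segment is $\geq\lambda_0|v_1-v_0|^2\geq 0$), which is what you should cite; then $\int_0^T\!\int_\Omega L(\gamma,\dot\gamma)$ is convex on $AC_2(0,T;\mathbb{H})$ and weak lsc follows. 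Second, you do not address uniqueness of the minimizer, which the paper proves (and which is needed to speak of \emph{the} minimizer $\gamma_T^t[X]$): the paper uses Taylor's theorem and the strict positivity of $f''$ to force $\dot\gamma^1=\dot\gamma^0$, hence $\gamma^1=\gamma^0$ from the common terminal value. Your decomposition could recover this too, but you would need to isolate a strictly convex piece (the $\lambda_0$ part) that survives after the absorption.
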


\begin{proof}
    We proceed using the Direct Method in the Calculus of Variations. First, let $(\gamma_n) \in AC_2(0,T; \mathbb{H})$ be a minimizing sequence on $\mathcal{\hat{B}}_0^T$:
    \begin{equation*}
        {\hat{\mathcal{B}}}_0^{T}(\gamma_n) \to \inf_{\bar{\gamma}} \left\{  {\hat{\mathcal{B}}}_0^{T}(\bar{\gamma}) \right\} \overset{\text{set}}{=} c
    \end{equation*}
    
    By definition, we have
    \begin{multline*}
        c + 1 \geq {\hat{\mathcal{B}}}_0^{T}(\gamma_n) 
        = \int_0^T\int_{\Omega} L(\gamma_n, \dot{\gamma_n}) \, d\omega dt 
        + \frac{1}{2} \int_0^T\int_{\Omega^2} \Phi(\gamma_n(t,\omega) - \gamma_n(t, \tilde{\omega})) \, d\omega d\tilde{\omega} dt \\
        + \frac{1}{2} \int_{\Omega^2} \Psi(\gamma_n(0,\omega) - \gamma_n(0, \tilde{\omega})) \, d\omega d\tilde{\omega} \\
        \geq c_0\int_0^T\int_{\Omega} |\dot{\gamma_n}|^2 \, d\omega dt - K_1 - K_2
    \end{multline*}

    Via our initial assumptions (i.e., bounds on $L, \Phi, \Psi$). Now, via the Poincar\'{e} Inequality, there exists some constant $K > 0$ such that
    \begin{equation*}
        c_0\int_0^T\int_{\Omega} |\dot{\gamma_n}|^2 \, d\omega dt \geq K\int_0^T\int_{\Omega} |\gamma_n|^2 \, d\omega dt 
    \end{equation*}

    Giving us
    \begin{align*}
        K\int_0^T\int_{\Omega} |\gamma_n|^2 \, d\omega dt - K_1 - K_2 &\leq c + 1 \\
        \left\| \gamma_n \right\|_{L^2}^2 = \int_0^T\int_{\Omega} |\gamma_n|^2 \, d\omega dt &\leq \frac{1}{K}\left(c + K_1 + K_2 + 1\right) \overset{\text{set}}{=} C^2
    \end{align*}

    Note that $C^2 > 0$ does not depend on $n$. Square-rooting both sides yields:
    \begin{equation*}
    \left\| \gamma_n \right\|_{L^2} \leq C \quad \forall n
    \end{equation*}
    
    Thus, we obtain the boundedness of $(\gamma_n)$. Recall that in $L^2$, every bounded sequence is weakly pre-compact, and conclude that our minimizing sequence is weakly pre-compact: that is, it admits a weakly convergent subsequence $(\gamma_{n_k})$ to some $\gamma_0$:
    
    \begin{equation*}
        \lim_{k \to \infty} \langle \gamma_{n_k}, g \rangle = \langle \gamma_0 , g \rangle \quad \forall g \in L^2
    \end{equation*}
    
    Next, we demonstrate the convexity of $\hat{\mathcal{B}}_0^T$. Consider some $\gamma, \bar{\gamma} \in L^2((0,T) \times \Omega; \mathbb{R}^d)$ where $\gamma(T, \cdot) = \bar{\gamma}(T, \cdot)$. Consider interpolation $\gamma^{\varepsilon} = (1 - \varepsilon)\gamma + \varepsilon\bar{\gamma}$ and set $f(\varepsilon) = {\hat{\mathcal{B}}}_0^{T}(\gamma^{\varepsilon}) = {\hat{\mathcal{A}}}_0^{T}(\gamma^{\varepsilon}) + \mathcal{U}_{*}(\gamma^{\varepsilon}(0, \cdot))$:
    \begin{multline*}
        f(\varepsilon) = \int_0^T\int_{\Omega} L(\gamma^{\varepsilon}, \dot{\gamma^{\varepsilon}}) \, d\omega dt 
        + \frac{1}{2} \int_0^T\int_{\Omega^2} \Phi(\gamma^{\varepsilon}(t,\omega) - \gamma^{\varepsilon}(t, \tilde{\omega})) \, d\omega d\tilde{\omega} dt \\
        + \frac{1}{2} \int_{\Omega^2} \Psi(\gamma^{\varepsilon}(0,\omega) - \gamma^{\varepsilon}(0, \tilde{\omega})) \, d\omega d\tilde{\omega}
    \end{multline*}

    Differentiating $f$ we obtain 
    \begin{multline*}
        f'(\varepsilon) = \int_0^T\int_{\Omega} \langle \nabla_{q} L(\gamma^{\varepsilon}, \dot{\gamma^{\varepsilon}}), \bar{\gamma} - \gamma \rangle + \langle \nabla_{v} L(\gamma^{\varepsilon}, \dot{\gamma^{\varepsilon}}), \dot{\bar{\gamma}} - \dot{\gamma} \rangle \, d\omega dt \\
        + \frac{1}{2}\int_0^T\int_{\Omega^2} \langle \nabla\Phi(\gamma^{\varepsilon}(t,\omega) - \gamma^{\varepsilon}(t,\tilde{\omega})), (\bar{\gamma}(t, \omega) - \gamma(t, \omega)) - (\bar{\gamma}(t, \tilde{\omega}) - \gamma(t, \tilde{\omega})) \rangle \, d\omega d\tilde{\omega} dt \\
        + \frac{1}{2}\int_{\Omega^2} \langle \nabla\Psi(\gamma^{\varepsilon}(0,\omega) - \gamma^{\varepsilon}(0,\tilde{\omega})), (\bar{\gamma}(0, \omega) - \gamma(0, \omega)) - (\bar{\gamma}(0, \tilde{\omega}) - \gamma(0, \tilde{\omega})) \rangle \, d\omega d\tilde{\omega}
    \end{multline*}

    If we consider the inner product containing $\Phi$, for example, see that we can rewrite it accordingly:
    \begin{multline*}
        \langle \nabla\Phi(\gamma^{\varepsilon}(t,\omega) - \gamma^{\varepsilon}(t,\tilde{\omega})), (\bar{\gamma}(t, \omega) - \gamma(t, \omega)) - (\bar{\gamma}(t, \tilde{\omega}) - \gamma(t, \tilde{\omega})) \rangle \\ 
        = \langle \nabla\Phi(\gamma^{\varepsilon}(t,\omega) - \gamma^{\varepsilon}(t,\tilde{\omega})), \bar{\gamma}(t, \omega) \gamma(t, \omega) \rangle \\ 
        - \langle \nabla\Phi(\gamma^{\varepsilon}(t,\omega) - \gamma^{\varepsilon}(t,\tilde{\omega})), \bar{\gamma}(t, \tilde{\omega}) - \gamma(t, \tilde{\omega}) \rangle
    \end{multline*}

    If we take the integral on the right hand side, we may split it up into two integrals: $\frac{1}{2}\int_0^T\int_{\Omega^2} \langle \nabla\Phi(\gamma^{\varepsilon}),\\ \bar{\gamma}(t,\omega) - \gamma(t, \omega) \rangle - \frac{1}{2}\int_0^T\int_{\Omega^2} \langle \nabla\Phi(\gamma^{\varepsilon}), \bar{\gamma}(t,\tilde{\omega}) - \gamma(t, \tilde{\omega}) \rangle$. We note that, in the second integral,
    \begin{multline*}
        -\frac{1}{2}\int_0^T\int_{\Omega^2} \langle \nabla\Phi(\gamma^{\varepsilon}(t,\omega) - \gamma^{\varepsilon}(t,\tilde{\omega})), \bar{\gamma}(t, \tilde{\omega}) - \gamma(t, \tilde{\omega}) \rangle \, d\omega d\tilde{\omega} dt\\ 
        = -\frac{1}{2}\int_0^T\int_{\Omega^2} \langle \nabla\Phi(\gamma^{\varepsilon}(t,\tilde{\omega}) - \gamma^{\varepsilon}(t,\omega)), \bar{\gamma}(t, \omega) - \gamma(t, \omega) \rangle \, d\omega d\tilde{\omega} dt \\
        = \frac{1}{2}\int_0^T\int_{\Omega^2} \langle -\nabla\Phi(\gamma^{\varepsilon}(t,\tilde{\omega}) - \gamma^{\varepsilon}(t,\omega)), \bar{\gamma}(t, \omega) - \gamma(t, \omega) \rangle \, d\omega d\tilde{\omega} dt \\
        = \frac{1}{2}\int_0^T\int_{\Omega^2} \langle \nabla\Phi(\gamma^{\varepsilon}(t,\omega) - \gamma^{\varepsilon}(t,\tilde{\omega})), \bar{\gamma}(t, \omega) - \gamma(t, \omega) \rangle \, d\omega d\tilde{\omega} dt
    \end{multline*}

    Effectively giving us another copy of the ``first" integral. With this reasoning we simplify
    \begin{multline*}
        \frac{1}{2}\int_0^T\int_{\Omega^2} \langle \nabla\Phi(\gamma^{\varepsilon}(t,\omega) - \gamma^{\varepsilon}(t,\tilde{\omega})), (\bar{\gamma}(t, \omega) - \gamma(t, \omega)) - (\bar{\gamma}(t, \tilde{\omega}) - \gamma(t, \tilde{\omega})) \rangle \, d\omega d\tilde{\omega} dt \\
        = \int_0^T\int_{\Omega^2} \langle \nabla\Phi(\gamma^{\varepsilon}(t,\omega) - \gamma^{\varepsilon}(t,\tilde{\omega})), \bar{\gamma}(t, \omega) - \gamma(t, \omega) \rangle \, d\omega d\tilde{\omega} dt
    \end{multline*}

    \newpage
    And
    \begin{multline*}
        \frac{1}{2}\int_{\Omega^2} \langle \nabla\Psi(\gamma^{\varepsilon}(0,\omega) - \gamma^{\varepsilon}(0,\tilde{\omega})), (\bar{\gamma}(0, \omega) - \gamma(0, \omega)) - (\bar{\gamma}(0, \tilde{\omega}) - \gamma(0, \tilde{\omega})) \rangle \, d\omega d\tilde{\omega} \\
        = \int_{\Omega^2} \langle \nabla\Psi(\gamma^{\varepsilon}(0,\omega) - \gamma^{\varepsilon}(0,\tilde{\omega})), \bar{\gamma}(0, \omega) - \gamma(0, \omega) \rangle \, d\omega d\tilde{\omega}
    \end{multline*}

    Giving us a slightly nicer expression for $f'(\varepsilon)$
    \begin{multline*}
        f'(\varepsilon) = \int_0^T\int_{\Omega} \langle \nabla_{q} L(\gamma^{\varepsilon}, \dot{\gamma^{\varepsilon}}), \bar{\gamma} - \gamma \rangle + \langle \nabla_{v} L(\gamma^{\varepsilon}, \dot{\gamma^{\varepsilon}}), \dot{\bar{\gamma}} - \dot{\gamma} \rangle \, d\omega dt \\
        + \int_0^T\int_{\Omega^2} \langle \nabla\Phi(\gamma^{\varepsilon}(t,\omega) - \gamma^{\varepsilon}(t,\tilde{\omega})), \bar{\gamma}(t, \omega) - \gamma(t, \omega) \rangle \, d\omega d\tilde{\omega} dt \\
        + \int_{\Omega^2} \langle \nabla\Psi(\gamma^{\varepsilon}(0,\omega) - \gamma^{\varepsilon}(0,\tilde{\omega})), \bar{\gamma}(0, \omega) - \gamma(0, \omega) \rangle \, d\omega d\tilde{\omega}
    \end{multline*}

    Differentiating again, we obtain $f''(\varepsilon)$:
    \begin{multline*}
        f''(\varepsilon) 
        = \int_0^T\int_{\Omega} \left\langle \nabla^2 L(\gamma^{\varepsilon}, \dot{\gamma^{\varepsilon}}) \begin{bmatrix} \bar{\gamma} - \gamma \\ \dot{\bar{\gamma}} - \dot{\gamma} \end{bmatrix}, \begin{bmatrix} \bar{\gamma} - \gamma \\ \dot{\bar{\gamma}} - \dot{\gamma} \end{bmatrix} \right\rangle \, d\omega dt \\
        + \int_0^T\int_{\Omega^2} \langle \nabla^2 \Phi(\gamma^{\varepsilon}(t,\omega) - \gamma^{\varepsilon}(t,\tilde{\omega}))\left( (\bar{\gamma}(t,\omega) - \gamma(t,\omega)) - (\bar{\gamma}(t, \tilde{\omega}) - \gamma(t, \tilde{\omega})) \right) , \bar{\gamma}(t,\omega) - \gamma(t,\omega) \rangle \, d\omega d\tilde{\omega} dt \\
        + \int_{\Omega^2} \langle \nabla^2 \Psi(\gamma^{\varepsilon}(0,\omega) - \gamma^{\varepsilon}(0,\tilde{\omega}))\left( (\bar{\gamma}(0,\omega) - \gamma(0,\omega)) - (\bar{\gamma}(0, \tilde{\omega}) - \gamma(0, \tilde{\omega})) \right) , \bar{\gamma}(0,\omega) - \gamma(0,\omega) \rangle \, d\omega d\tilde{\omega}
    \end{multline*}
    
    Splitting up the inner products gives us
    \begin{align*}
        f''(\varepsilon)
        &= \int_0^T\int_{\Omega} \left\langle \nabla^2 L(\gamma^{\varepsilon}, \dot{\gamma^{\varepsilon}}) \begin{bmatrix} \bar{\gamma} - \gamma \\ \dot{\bar{\gamma}} - \dot{\gamma} \end{bmatrix}, \begin{bmatrix} \bar{\gamma} - \gamma \\ \dot{\bar{\gamma}} - \dot{\gamma} \end{bmatrix} \right\rangle \, d\omega dt \\
        &+ \int_0^T\int_{\Omega^2} \langle \nabla^2 \Phi(\gamma^{\varepsilon}(t,\omega) - \gamma^{\varepsilon}(t,\tilde{\omega}))(\bar{\gamma}(t,\omega) - \gamma(t,\omega)) , \bar{\gamma}(t,\omega) - \gamma(t,\omega) \rangle \, d\omega d\tilde{\omega} dt \\
        &- \int_0^T\int_{\Omega^2} \langle \nabla^2 \Phi(\gamma^{\varepsilon}(t,\omega) - \gamma^{\varepsilon}(t,\tilde{\omega}))(\bar{\gamma}(t,\tilde{\omega}) - \gamma(t,\tilde{\omega})) , \bar{\gamma}(t,\omega) - \gamma(t,\omega) \rangle \, d\omega d\tilde{\omega} dt \\
        &+ \int_{\Omega^2} \langle \nabla^2 \Psi(\gamma^{\varepsilon}(0,\omega) - \gamma^{\varepsilon}(0,\tilde{\omega}))(\bar{\gamma}(0,\omega) - \gamma(0,\omega)) , \bar{\gamma}(0,\omega) - \gamma(0,\omega) \rangle \, d\omega d\tilde{\omega} \\
        &- \int_{\Omega^2} \langle \nabla^2 \Psi(\gamma^{\varepsilon}(0,\omega) - \gamma^{\varepsilon}(0,\tilde{\omega}))(\bar{\gamma}(0,\tilde{\omega}) - \gamma(0,\tilde{\omega})) , \bar{\gamma}(0,\omega) - \gamma(0,\omega) \rangle \, d\omega d\tilde{\omega}
    \end{align*}

    From which, through our initial assumptions,
    \begin{multline*}
        f''(\varepsilon) \geq 
        \int_0^T\int_{\Omega} \lambda_0|\dot{\bar{\gamma}} - \dot{\gamma}|^2 \, d\omega dt
        + \int_0^T\int_{\Omega^2}  c_{\Phi}|\bar{\gamma}(t,\omega) - \gamma(t,\omega)|^2 \, d\omega d\tilde{\omega} dt \\
        + \int_{\Omega^2} c_{\Psi}|\bar{\gamma}(0,\omega) - \gamma(0,\omega)|^2 \, d\omega d\tilde{\omega} \\
        - \int_0^T\int_{\Omega^2} \langle \nabla^2 \Phi(\gamma^{\varepsilon}(t,\omega) - \gamma^{\varepsilon}(t,\tilde{\omega}))(\bar{\gamma}(t,\tilde{\omega}) - \gamma(t,\tilde{\omega})) , \bar{\gamma}(t,\omega) - \gamma(t,\omega) \rangle \, d\omega d\tilde{\omega} dt \\
        - \int_{\Omega^2} \langle \nabla^2 \Psi(\gamma^{\varepsilon}(0,\omega) - \gamma^{\varepsilon}(0,\tilde{\omega}))(\bar{\gamma}(0,\tilde{\omega}) - \gamma(0,\tilde{\omega})) , \bar{\gamma}(0,\omega) - \gamma(0,\omega) \rangle \, d\omega d\tilde{\omega}
    \end{multline*} 

    We utilize the following fact:
    \begin{equation*}
        \langle \nabla^2 f(x)a, b \rangle \leq \text{const}(\nabla^2 f(x)) (|a|^2 + |b|^2)
    \end{equation*}

    Let $C_1 = \alpha(\nabla^2 \Phi), C_2 = \beta(\nabla^2 \Psi)$ be such that $\langle \nabla^2 \Phi(\cdot) a, b \rangle \leq C_1(|a|^2 + |b|^2)$ and $\langle \nabla^2 \Psi(\cdot) a, b \rangle \leq C_2(|a|^2 + |b|^2)$. We make the next simplification:
    \begin{multline*}
        -\int_0^T\int_{\Omega^2} \langle \nabla^2 \Phi(\gamma^{\varepsilon}(t,\omega) - \gamma^{\varepsilon}(t,\tilde{\omega}))(\bar{\gamma}(t,\tilde{\omega}) - \gamma(t,\tilde{\omega})) , \bar{\gamma}(t,\omega) - \gamma(t,\omega) \rangle \, d\omega d\tilde{\omega} dt \\
        \geq -\int_0^T\int_{\Omega^2} C_1 \left(|\bar{\gamma}(t,\tilde{\omega}) - \gamma(t,\tilde{\omega})|^2 + |\bar{\gamma}(t,\omega) - \gamma(t,\omega)|^2 \right) \, d\omega d\tilde{\omega} dt \\
        = -C_1\biggl[ \biggr.\int_0^T\int_{\Omega^2} |\bar{\gamma}(t,\tilde{\omega}) - \gamma(t,\tilde{\omega})|^2  \, d\omega d\tilde{\omega} dt \\
        + \int_0^T\int_{\Omega^2} |\bar{\gamma}(t,\omega) - \gamma(t,\omega)|^2 \, d\omega d\tilde{\omega} dt \biggl. \biggr]
    \end{multline*}

    And we note that
    \begin{equation*}
        \int_0^T\int_{\Omega^2} |\bar{\gamma}(t,\tilde{\omega}) - \gamma(t,\tilde{\omega})|^2  \, d\omega d\tilde{\omega} dt = \int_0^T\int_{\Omega^2} |\bar{\gamma}(t,\omega) - \gamma(t,\omega)|^2 \, d\omega d\tilde{\omega} dt
    \end{equation*}

    Giving us
    \begin{multline*}
        -\int_0^T\int_{\Omega^2} \langle \nabla^2 \Phi(\gamma^{\varepsilon}(t,\omega) - \gamma^{\varepsilon}(t,\tilde{\omega}))(\bar{\gamma}(t,\tilde{\omega}) - \gamma(t,\tilde{\omega})) , \bar{\gamma}(t,\omega) - \gamma(t,\omega) \rangle \, d\omega d\tilde{\omega} dt \\
        \geq -\int_0^T\int_{\Omega^2} 2C_1 |\bar{\gamma}(t,\omega) - \gamma(t,\omega)|^2 \, d\omega d\tilde{\omega} dt
    \end{multline*}

    And, with the same reasoning,
    \begin{multline*}
        -\int_{\Omega^2} \langle \nabla^2 \Psi(\gamma^{\varepsilon}(0,\omega) - \gamma^{\varepsilon}(0,\tilde{\omega}))(\bar{\gamma}(0,\tilde{\omega}) - \gamma(0,\tilde{\omega})) , \bar{\gamma}(0,\omega) - \gamma(0,\omega) \rangle \, d\omega d\tilde{\omega} \\
        \geq -\int_{\Omega^2} 2C_2 |\bar{\gamma}(0,\omega) - \gamma(0,\omega)|^2 \, d\omega d\tilde{\omega}
    \end{multline*}

    Which allows us to rewrite our inequality for $f''(\varepsilon)$:
    \begin{multline*}
        f''(\varepsilon) \geq 
        \int_0^T\int_{\Omega} \lambda_0|\dot{\bar{\gamma}} - \dot{\gamma}|^2 \, d\omega dt
        + \int_0^T\int_{\Omega^2}  (c_{\Phi}^{+} - c_{\Phi}^{-} - 2C_1)|\bar{\gamma}(t,\omega) - \gamma(t,\omega)|^2 \, d\omega d\tilde{\omega} dt \\
        + \int_{\Omega^2} (c_{\Psi}^{+} - c_{\Psi}^{-} - 2C_2)|\bar{\gamma}(0,\omega) - \gamma(0,\omega)|^2 \, d\omega d\tilde{\omega}
    \end{multline*}

    Where we rewrite $c_{\Phi} = c_{\Phi}^{+} - c_{\Phi}^{-}$, $c_{\Psi} = c_{\Psi}^{+} - c_{\Psi}^{-}$. Notice now that the second and third integrals are constants with respect to $\tilde{\omega}$, and so we can evaluate directly and obtain
    \begin{multline*}
        f''(\varepsilon) \geq 
        \int_0^T\int_{\Omega} \lambda_0|\dot{\bar{\gamma}} - \dot{\gamma}|^2 \, d\omega dt
        + \int_0^T\int_{\Omega}  (c_{\Phi}^{+} - c_{\Phi}^{-} - 2C_1)|\bar{\gamma}(t,\omega) - \gamma(t,\omega)|^2 \, d\omega dt \\
        + \int_{\Omega} (c_{\Psi}^{+} - c_{\Psi}^{-} - 2C_2)|\bar{\gamma}(0,\omega) - \gamma(0,\omega)|^2 \, d\omega
    \end{multline*}

    We rewrite the first integral as follows:
    \begin{equation*}
        \int_0^T\int_{\Omega} \lambda_0|\dot{\bar{\gamma}} - \dot{\gamma}|^2 \, d\omega dt = 2\left(\int_0^T\int_{\Omega} \frac{\lambda_0}{2}|\dot{\bar{\gamma}} - \dot{\gamma}|^2 \, d\omega dt\right)
    \end{equation*}

    And we group accordingly:
    \begin{multline*}
        f''(\varepsilon) \geq \left[\int_0^T\int_{\Omega}  (c_{\Phi}^{+} - c_{\Phi}^{-} - 2C_1)|\bar{\gamma}(t,\omega) - \gamma(t,\omega)|^2 \, d\omega dt + \int_0^T\int_{\Omega} \frac{\lambda_0}{2}|\dot{\bar{\gamma}} - \dot{\gamma}|^2 \, d\omega dt \right] \\ 
        + \left[ \int_{\Omega} (c_{\Psi}^{+} - c_{\Psi}^{-} - 2C_2)|\bar{\gamma}(0,\omega) - \gamma(0,\omega)|^2 \, d\omega + \int_0^T\int_{\Omega} \frac{\lambda_0}{2}|\dot{\bar{\gamma}} - \dot{\gamma}|^2 \, d\omega dt \right] 
    \end{multline*}

    By the Poincar\'{e} Inequality, we have
    \begin{align*}
        \frac{\lambda_0}{2} \int_0^T |\dot{\bar{\gamma}}(t,\omega) - \dot{\gamma}(t,\omega)|^2 \, dt &\geq \frac{\lambda_0}{2} \cdot \frac{2}{T^2} \int_0^T |\bar{\gamma}(t,\omega) - \gamma(t,\omega)|^2 \, dt \\
        \frac{\lambda_0}{2} \int_0^T \int_{\Omega} |\dot{\bar{\gamma}}(t,\omega) - \dot{\gamma}(t,\omega)|^2 \, d\omega dt &\geq \frac{\lambda_0}{T^2} \int_0^T \int_{\Omega} |\bar{\gamma}(t,\omega) - \gamma(t,\omega)|^2 \, d\omega dt
    \end{align*}

    And also, 
    \begin{align*}
        \frac{\lambda_0}{2} \int_0^T |\dot{\bar{\gamma}}(t,\omega) - \dot{\gamma}(t,\omega)|^2 \, dt &\geq \frac{\lambda_0}{2} \cdot \frac{1}{T} |\bar{\gamma}(0,\omega) - \gamma(0,\omega)|^2 \\
        \frac{\lambda_0}{2} \int_0^T \int_{\Omega} |\dot{\bar{\gamma}}(t,\omega) - \dot{\gamma}(t,\omega)|^2 \, d\omega dt &\geq \frac{\lambda_0}{2T} \int_{\Omega} |\bar{\gamma}(0,\omega) - \gamma(0,\omega)|^2 \, d\omega
    \end{align*}

    Yielding
    \begin{multline*}
        f''(\varepsilon) \geq \int_0^T \int_{\Omega} \left(c_{\Phi}^{+} - c_{\Phi}^{-} - 2C_1 + \frac{\lambda_0}{T^2}\right)|\bar{\gamma}(t,\omega) - \gamma(t,\omega)|^2 \, d\omega dt \\ 
        + \int_{\Omega}\left( c_{\Psi}^{+} - c_{\Psi}^{-} - 2C_2 + \frac{\lambda_0}{2T} \right) |\bar{\gamma}(0,\omega) - \gamma(0,\omega)|^2 \, d\omega 
    \end{multline*}

    In which for small enough $T$, we have that both $c_{\Phi}^{+} - c_{\Phi}^{-} - 2C_1 + \frac{\lambda_0}{T^2}$ and $c_{\Psi}^{+} - c_{\Psi}^{-} - 2C_2 + \frac{\lambda_0}{2T}$ are nonnegative. Thus, we conclude that $f$ is convex, and it follows that as it is a slice of $\hat{\mathcal{B}}_0^T$, we conclude that $\hat{\mathcal{B}}_0^T$ is also convex. $\hat{\mathcal{B}}_0^T$ convex is more than sufficient to show that $\hat{\mathcal{B}}_0^T$ is weakly lower semi-continuous. That is, for any sequence $y_n \rightharpoonup y$,
    \begin{equation*}
        \liminf_{n \to \infty} \hat{\mathcal{B}}_0^T(y_n) \geq \hat{\mathcal{B}}_0^T(y)
    \end{equation*}
    
    And thus we conclude
    \begin{equation*}
        \inf_{\bar{\gamma}} \left\{ {\hat{\mathcal{B}}}_0^{T}(\bar{\gamma}) \right\} = \lim_{n \to \infty} {\hat{\mathcal{B}}}_0^{T}(\gamma_n) = \lim_{k \to \infty} {\hat{\mathcal{B}}}_0^{T}(\gamma_{n_k}) \geq \hat{\mathcal{B}}_0^T(\gamma_0) \geq \inf_{\bar{\gamma}} \left\{ {\hat{\mathcal{B}}}_0^{T}(\bar{\gamma}) \right\}
    \end{equation*}
    
    From which we conclude that a minimum exists for some minimizer $\gamma_0$:
    \begin{equation*}
        \hat{U}(T, X) = \inf_{\gamma} \left\{ \hat{\mathcal{B}}_0^T(\gamma) : \gamma \in AC_2(0,T; \mathbb{H}), \gamma(T) = X \right\}
    \end{equation*}

    We now show that such a minimizer is unique. Assume we have two distinct minimizers of $\hat{\mathcal{B}}_0^T$ $\gamma^0 \neq \gamma^1$. We aim to show that they are equal. We consider a convex combination of these minimizers $\gamma^\varepsilon = (1 - \varepsilon)\gamma^0 - \varepsilon \gamma^1$. Taylor's Theorem on $\hat{\mathcal{B}}_0^T$ yields 
    \begin{equation*}
        \hat{\mathcal{B}}_0^T(\gamma^1) - \hat{\mathcal{B}}_0^T(\gamma^0) = \frac{d}{d\varepsilon}  \left. \hat{\mathcal{B}}_0^T(\gamma^\varepsilon) \right|_{\varepsilon = 0} +  \frac{1}{2} \left. \frac{d^2}{d\varepsilon^2} \hat{\mathcal{B}}_0^T(\gamma^\varepsilon) \right|_{\varepsilon = \varepsilon^*}
    \end{equation*}
    
    For some $0 < \varepsilon^* < \varepsilon$. By definition, $\hat{\mathcal{B}}_0^T(\gamma^1) - \hat{\mathcal{B}}_0^T(\gamma^0) = 0$ as they are both minimizers. Additionally, as $\varepsilon = 0$ in $\gamma^\varepsilon$ yields a minimizer ($\gamma^0$), we conclude that $\left. \hat{\mathcal{B}}_0^T(\gamma^\varepsilon) \right|_{\varepsilon = 0} = 0$. Hence, from above we can also conclude that $\left. \frac{d^2}{d\varepsilon^2}\hat{\mathcal{B}}_0^T(\gamma^\varepsilon) \right|_{\varepsilon = \varepsilon^*} = 0$. From above we have that
    \begin{equation*}
        f''(\varepsilon) \geq  \left(\lambda_0 - \frac{T^2}{2}\left(c_{\Phi}^{-} + \| \nabla^2 \Phi \|_{L^\infty} \right) - T\left(c_{\Psi}^{-} + \| \nabla^2 \Psi \|_{L^\infty} \right) \right) \int_0^T \int_{\Omega} |\dot{\gamma}^1 - \dot{\gamma}^0|^2 \, d\omega dt \geq 0
    \end{equation*}
    
    Via our initial assumptions the above constant multiple is strictly positive, hence we conclude that if $f''(\varepsilon^*) = 0$ then
    \begin{equation*}
        \int_0^T \int_{\Omega} |\dot{\gamma}^1 - \dot{\gamma}^0|^2 \, d\omega dt = 0 \implies \dot{\gamma}^1 = \dot{\gamma}^0
    \end{equation*}
    
    As the terminal position for all minimizers is $X$ ($\gamma(T) = X$), we conclude that as $\gamma^1$ and $\gamma^0$ have equal derivatives and agree at a point, they must be equal everywhere: $\gamma^1 = \gamma^0$, and hence the minimizer is unique. We denote the unique minimizer of $\hat{U}(T,X)$ at time $t$ and given player label $\omega$ as $\gamma_T^t[X](\omega)$.
\end{proof}

\subsection{Euler--Lagrange Equation of Minimizer}

We now proceed to demonstrate that as a minimizer achieving $\hat{U}(t,X)$, $s \mapsto \gamma_t^s[X]$ is a solution of the Euler--Lagrange Equation:

\begin{proposition}
    \label{euler_lagrange_thm_uhat}
    $\gamma(s,\omega) = {\gamma}_t^s[X](\omega)$ is a minimizer achieving $\hat{U}(t,X)$ if and only if 
    \begin{equation*}
        \begin{dcases}
            \frac{d}{ds} \nabla_v L(\gamma(s, \omega), \dot{\gamma}(s, \omega)) = \nabla_q L(\gamma(s, \omega), \dot{\gamma}(s, \omega)) + \int_{\Omega} \nabla\Phi (\gamma(s, \omega) - \gamma(s, \tilde{\omega})) \, d\tilde{\omega} \\
            \gamma(t,\omega) = X(\omega) \\
            \int_{\Omega} \nabla \Psi(\gamma(0, \omega) - \gamma(0, \tilde{\omega})) \, d\tilde{\omega} = \nabla_v L(\gamma(0,\omega), \dot{\gamma}(0,\omega))
        \end{dcases}
    \end{equation*}
\end{proposition}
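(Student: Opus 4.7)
The plan is to characterize the minimizer via the vanishing of the first variation of $\hat{\mathcal{B}}_0^t$. For the forward direction, suppose $\gamma = \gamma_t^s[X]$ is the unique minimizer from Theorem~\ref{exist_uhat_thm}. I would take an arbitrary admissible perturbation $\gamma_\varepsilon = \gamma + \varepsilon \eta$ with $\eta \in AC_2(0,t;\mathbb{H})$ satisfying $\eta(t,\omega) = 0$ so that $\gamma_\varepsilon(t,\cdot) = X$ remains admissible, while leaving $\eta(0,\omega)$ free — this is the key choice that will generate the natural boundary condition at $s = 0$. Minimality then forces $\frac{d}{d\varepsilon}\hat{\mathcal{B}}_0^t(\gamma_\varepsilon)\big|_{\varepsilon = 0} = 0$.

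The derivative computation is essentially the $f'(\varepsilon)$ calculation already performed in the proof of Theorem~\ref{exist_uhat_thm}, with $\bar{\gamma} - \gamma$ replaced by $\eta$. Evenness of $\Phi$ and $\Psi$ combined with the $\omega \leftrightarrow \tilde{\omega}$ symmetry of the double integrals over $\Omega^2$ collapses the interaction contributions to single integrals against the kernels $\int_\Omega \nabla\Phi(\gamma(s,\omega) - \gamma(s,\tilde{\omega}))\, d\tilde{\omega}$ and $\int_\Omega \nabla\Psi(\gamma(0,\omega) - \gamma(0,\tilde{\omega}))\, d\tilde{\omega}$, exactly as in the existence proof. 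I would then integrate the $\langle \nabla_v L(\gamma,\dot{\gamma}), \dot{\eta}\rangle$ term by parts in $s$: the condition $\eta(t,\omega) = 0$ kills the upper endpoint, while the lower endpoint yields a boundary contribution $-\langle \nabla_v L(\gamma(0,\omega), \dot{\gamma}(0,\omega)), \eta(0,\omega)\rangle$. The resulting identity expresses the first variation as a sum of an interior integral tested against $\eta$ on $(0,t) \times \Omega$ and a boundary integral tested against $\eta(0,\cdot)$ on $\Omega$.

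To extract the equations, I first restrict to $\eta$ compactly supported in $(0,t)$ and vanishing at $s = 0$; the fundamental lemma of the calculus of variations then delivers the Euler--Lagrange equation pointwise in $(s,\omega)$. Releasing $\eta(0,\cdot)$ to be arbitrary in $\mathbb{H}$ isolates the remaining boundary integral and, by arbitrariness, produces the natural boundary condition at $s = 0$. For the converse direction, I would invoke the strict convexity of $\hat{\mathcal{B}}_0^t$ on the affine slice $\{\gamma(t,\cdot) = X\}$ already established in Theorem~\ref{exist_uhat_thm}: any $\gamma$ at which the first variation vanishes against all admissible $\eta$ is a critical point, and strict convexity upgrades any critical point to the unique global minimizer. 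The main obstacle is simply organizing the symmetrization so that the $\nabla\Phi$ and $\nabla\Psi$ double integrals collapse correctly and pair cleanly with $\eta(s,\omega)$ and $\eta(0,\omega)$; this is the same manipulation already vetted in Theorem~\ref{exist_uhat_thm} and transfers over essentially verbatim, with the only mild subtlety being to track carefully which boundary term survives the integration by parts, since only the terminal endpoint is pinned by the admissibility of $\eta$.
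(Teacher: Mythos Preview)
Your approach is correct and in fact somewhat cleaner than the paper's. The paper takes test variations $g$ that vanish at \emph{both} time endpoints, $g(0,\cdot)=g(t,\cdot)=0$, so the integration by parts kills both boundary terms and only the interior Euler--Lagrange equation falls out; the natural boundary condition at $s=0$ is then obtained indirectly by forward-referencing Claim~\ref{uhat_claim_X_derivative} evaluated at $t=0$ (which identifies $\nabla_X\hat U(0,\gamma(0))=\nabla_v L(\gamma(0),\dot\gamma(0))$, and $\nabla_X\hat U(0,\cdot)=\nabla\mathcal U_*$). By contrast, you leave $\eta(0,\cdot)$ free, so the surviving lower boundary term pairs with the $\nabla\Psi$ contribution from $\mathcal{U}_*$ and the natural condition drops out of the same first-variation computation---the standard derivation of a natural boundary condition, and self-contained. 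You also explicitly handle the converse via the strict convexity from Theorem~\ref{exist_uhat_thm}, which the paper's proof does not address despite the ``if and only if'' in the statement. The paper's route has the mild advantage of reusing Claim~\ref{uhat_claim_X_derivative}, which is needed later anyway; yours is logically tidier and avoids the forward reference.
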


\begin{proof}
    Consider
    \begin{equation*}
        I(\bar{\gamma}) = \int_0^t \int_{\Omega} L(\bar{\gamma}, \dot{\bar{\gamma}}) + \left( \frac{1}{2} \int_{\Omega} \Phi(\gamma(s,\omega) - \gamma(s, \tilde{\omega})) \, d\tilde{\omega} \right) \, d\omega dt
    \end{equation*}
    
    Suppose $\bar{\gamma}(t) = X$. Suppose $\gamma(s, \omega) = \gamma_t^s[X](\omega)$ is the minimizer of $I$ ($I(\gamma) = \hat{U}(t,X)$). Consider a variation on $\gamma$: let $g$ be such that $g(0, \cdot) = g(t, \cdot) = g(\cdot, 0) = g(\cdot, 1) = 0$. Let $\varepsilon \in \mathbb{R}$. Then
    \begin{equation*}
        \begin{aligned}
            (\gamma + \varepsilon v)(0, \cdot) &= \gamma(0, \cdot) \\
            (\gamma + \varepsilon v)(t, \cdot) &= \gamma(t, \cdot)
        \end{aligned}
        \quad
        \text{and}
        \quad
        \begin{aligned}
            (\gamma + \varepsilon v)(\cdot, 0) &= \gamma(\cdot, 0) \\
            (\gamma + \varepsilon v)(\cdot, 1) &= \gamma(\cdot, 1)
        \end{aligned}
    \end{equation*}
    
    Let $a(\varepsilon) = I(\gamma + \varepsilon g)$. Then
    \begin{equation*}
        a(\varepsilon) = \int_0^t\int_{\Omega} L(\gamma + \varepsilon g, \dot{\gamma} + \varepsilon \dot{g}) + \left( \frac{1}{2} \int_{\Omega} \Phi((\gamma + \varepsilon g)(s,\omega) - (\gamma + \varepsilon g)(s, \tilde{\omega})) \, d\tilde{\omega} \right) \, d\omega dt
    \end{equation*}
    \begin{multline*}
        a'(\varepsilon) = \int_0^t\int_{\Omega} \langle \nabla_q L(\gamma + \varepsilon g, \dot{\gamma} + \varepsilon \dot{g}), g \rangle + \langle \nabla_v L(\gamma + \varepsilon g, \dot{\gamma} + \varepsilon \dot{g}), \dot{g} \rangle \\
        + \left(\int_{\Omega} \langle \nabla \Phi((\gamma + \varepsilon g)(s,\omega) - (\gamma + \varepsilon g)(s,\tilde{\omega})), g \rangle \, d\tilde{\omega} \right) \, d\omega dt
    \end{multline*}
    
    As $\gamma$ minimizes $I$, $\varepsilon = 0$ minimizes $a$, hence
    \begin{equation*}
        0 = a'(0) = \int_0^t\int_{\Omega} \langle \nabla_q L(\gamma, \dot{\gamma}), g \rangle + \langle \nabla_v L(\gamma, \dot{\gamma}), \dot{g} \rangle + \left\langle \int_{\Omega} \nabla \Phi(\gamma(s,\omega) - \gamma(s, \tilde{\omega})) \, d\tilde{\omega}, g \right\rangle \, d\omega dt
    \end{equation*}
    
    We rewrite via Integration by Parts:
    \begin{equation*}
        \int_0^t\int_{\Omega} \langle \nabla_v L(\gamma, \dot{\gamma}, g \rangle \, d\omega ds = \left. \int_{\Omega} \langle \nabla_v L(\gamma, \dot{\gamma}), g \rangle \, d\omega \right]_{0}^t - \int_0^t \int_{\Omega} \left\langle \frac{d}{ds} \nabla_v L(\gamma, \dot{\gamma}) , g \right\rangle \, d\omega ds
    \end{equation*}
    
    Note that
    \begin{equation*}
        \left. \int_{\Omega} \langle \nabla_v L(\gamma, \dot{\gamma}), g \rangle \, d\omega \right]_{0}^t = \int_{\Omega} \langle \nabla_v L(\gamma(t), \dot{\gamma}(t)), 0 \rangle \, d\omega - \int_{\Omega} \langle \nabla_v L(\gamma(0), \dot{\gamma}(0)), 0 \rangle \, d\omega = 0
    \end{equation*}
    
    Hence we obtain
    \begin{equation*}
        \int_0^t\int_{\Omega} \left\langle \nabla_q L(\gamma, \dot{\gamma}) - \frac{d}{ds} \nabla_v L(\gamma, \dot{\gamma}) + \int_{\Omega} \nabla \Phi(\gamma(s,\omega) - \gamma(s, \tilde{\omega})) \, d\tilde{\omega}, g \right\rangle \, d\omega ds = 0
    \end{equation*}
    
    As the above is true for all $g$, we conclude
    \begin{equation*}
        \frac{d}{ds} \nabla_v L(\gamma(s,\omega), \dot{\gamma}(s,\omega)) = \nabla_q L(\gamma(s,\omega), \dot{\gamma}(s,\omega)) + \int_{\Omega} \nabla \Phi(\gamma(s,\omega) - \gamma(s, \tilde{\omega})) \, d\tilde{\omega}
    \end{equation*}
    
    Where again, $\gamma(s,\omega) = \gamma_t^s[X](\omega)$. We show that
    \begin{equation*}
        \int_{\Omega} \nabla \Psi(\gamma(0,\omega) - \gamma(0,\tilde{\omega})) \, d\tilde{\omega} = \nabla_v L(\gamma(0, \omega), \dot{\gamma}(0, \omega))
    \end{equation*}
    
    By plugging in $t = 0$ in Claim \ref{uhat_claim_X_derivative} and thus obtain the desired result.
\end{proof}
\section{The Differentiability of \texorpdfstring{$\hat{U}$}{U-hat} and its Hamilton--Jacobi Equation}

We next aim to show that $\hat{U}$ is (continuously) differentiable (existence of $\partial_t\hat{U}, \nabla_X\hat{U} \in C$) and that it satisfies the Hamilton--Jacobi Equation
\begin{equation*}
    \partial_t \hat{U}(t,X) + \hat{H}(X, \nabla_X\hat{U}(t,X)) = 0
\end{equation*}

\subsection{Existence \& Continuity of \texorpdfstring{$\partial_t \hat{U}$}{Partial-t U-hat}, \texorpdfstring{$\nabla_X\hat{U}$}{Nabla-X U-hat}}

First recall the distributional derivative of some function $f$. If $f: \mathbb{R}^d \to \mathbb{R}$, let $\varphi : \mathbb{R}^d \to \mathbb{R}$ be of class $C_c^{\infty}(\mathbb{R}^d)$. We can define the distributional derivative $g$ as the function such that, for all $\varphi$,
\begin{equation*}
    \int_{\mathbb{R}^d} f \frac{\partial \varphi}{\partial x_i} \, dx = -\int_{\mathbb{R}^d} g \varphi \, dx 
\end{equation*}

Equivalently, we can look at higher power derivatives: let $g^{(n)}$ denote the $n$-th distributional derivative of $f$, then by definition,
\begin{equation*}
    \int_{\mathbb{R}^d} f \frac{\partial^n \varphi}{\partial x_{i_1} \partial x_{i_2} \cdots \partial x_{i_n}} \, dx = (-1)^n\int_{\mathbb{R}^d} g^{(n)} \varphi \, dx
\end{equation*}

When this is the case, we simply denote $g$ (and $g^{(n)}$) as
\begin{align*}
    g &= \frac{\partial f}{\partial x_i} \\
    g^{(n)} &= \frac{\partial^n f}{\partial x_{i_1} \partial x_{i_2} \cdots \partial x_{i_n}}
\end{align*}

A nice consequence about considering distributional derivatives is that they always exist, even when the classical derivative may not exist. Furthermore, if the classical derivative does exist, the distributional derivative is exactly equal to the classical derivative. Thus, we may use them in our discussion of demonstrating the differentiability of $\hat{U}$ (if the existence of the classical derivative in context is not certain, it may be assumed that it is the distributional derivative being used). 

Consider the following: for $f: \mathbb{H} \to \mathbb{R}$, if $-C \leq \nabla^2f \leq C$ (in a distributional sense), then $\nabla f$ exists and is $C$--Lipschitz (in a classical sense). If $\mathbb{H} = \mathbb{R}^d$, for example, $\nabla^2 f(x)$ is symmetric, and thus if $f$ is smooth enough,

\begin{equation*}
    \| \nabla^2f(x) \|^2 = \sum_{i,j = 1}^d \left| \frac{\partial^2 f}{\partial x_i \partial x_j} \right|^2 = \sum_{i = 1}^d \lambda_i^2(x)
\end{equation*}
    
Where $\{\lambda_i\}$ is the set of eigenvalues of $\nabla^2f(x)$. Let $C$ be such that $-C \leq \lambda_i \leq C \; \forall i$. Thus, $\| \nabla^2f(x) \|^2 \leq dC^2$. We can easily see then that, for $0 \leq t \leq 1$,
\begin{align*}
    \| \nabla f(x) - \nabla f(y) \| &= \left\| \int_0^1 \nabla^2 f((1 - t)x + ty)(x-y) \, dt \right\| \\
    &\leq \int_0^1 \| \nabla^2 f((1 - t)x + ty) \| \, dt \cdot \| x - y \| \\
    &\leq \sqrt{d}C \| x - y \|
\end{align*}

(If $\mathbb{H}$ is general, we have that $\dim(\text{span}\{x,y\}) = 2$, and thus $\| \nabla f(x) - \nabla f(y) \| \leq \sqrt{2}C \| x - y \|$). Thus, $\nabla f$ is $C$--Lipschitz. Note that, using Taylor's Theorem, 
\begin{align*}
    f(x + h) &= f(x) + \langle \nabla f(x), h \rangle + \int_0^1 \int_0^t \langle \nabla^2 f(x + sh)h , h \rangle \, ds dt \\
    f(x - h) &= f(x) - \langle \nabla f(x), h \rangle + \int_0^1 \int_0^t \langle \nabla^2 f(x - sh)h , h \rangle \, ds dt
\end{align*}

Thus
\begin{equation*}
    f(x + h) + f(x - h) - 2f(x) = \int_0^1 \int_0^t \langle \nabla^2 (f(x + sh) + f(x - sh))h , h \rangle \, ds dt
\end{equation*}

If $\|\nabla^2 f(x)\| \leq C$, we have then that
\begin{align*}
    |f(x + h) + f(x - h) - 2f(x)| &\leq \int_0^1 \int_0^t \langle 2Ch , h \rangle \, ds dt \\
    &= \int_0^1 2Ct \langle h,h \rangle \, dt \\
    &= C \langle h,h \rangle \\
    &= C|h|^2
\end{align*}

From which we can conclude
\begin{align*}
    \nabla^2 f(x) \geq -C &\iff f(x + h) + f(x - h) - 2f(x) \geq -C|h|^2 \\
    \nabla^2 f(x) \leq C &\iff f(x + h) + f(x - h) - 2f(x) \leq C|h|^2
\end{align*}

In other words, if we can show that $-C \leq \nabla_{XX} \hat{U}(t,X) \leq C$ for some constant $C$, then we can conclude that $\nabla_X \hat{U}(t,X)$ exists and that $\nabla_X \hat{U}(t,\cdot)$ is $C$--Lipschitz (hence, continuous). We use extra reasoning to conclude that $\nabla_X \hat{U}$ is also continuous on time.

\begin{theorem}
    \label{first_derivative_x_uhat}
    $\nabla_X \hat{U}(t,X)$ exists and is continuous.
\end{theorem}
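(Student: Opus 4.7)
The plan is to invoke the distributional second-derivative criterion established in the paragraphs preceding the theorem: if I can produce a constant $C > 0$ with
\begin{equation*}
    -C\|H\|_{\mathbb{H}}^2 \leq \hat{U}(T,X+H) + \hat{U}(T,X-H) - 2\hat{U}(T,X) \leq C\|H\|_{\mathbb{H}}^2
\end{equation*}
for every $H \in \mathbb{H}$, then $\nabla_X \hat{U}(T,\cdot)$ exists and is $C$--Lipschitz, which immediately delivers existence and continuity in the spatial argument. Continuity in the time argument then requires a separate stability statement for the minimizer $\gamma_T^{\,\cdot}[X]$ furnished by Theorem \ref{exist_uhat_thm}.

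For the upper bound I would use the optimal $\gamma = \gamma_T^{\,\cdot}[X]$ as a scaffold and form the competitors $\gamma^{\pm}(s,\omega) = \gamma(s,\omega) \pm \frac{s}{T} H(\omega)$, which satisfy $\gamma^{\pm}(T,\cdot) = X \pm H$ and $\gamma^{+} + \gamma^{-} = 2\gamma$. Minimality gives $\hat{U}(T, X \pm H) \leq \hat{\mathcal{B}}_0^T(\gamma^{\pm})$, so summing and subtracting $2\hat{\mathcal{B}}_0^T(\gamma)$ expresses the full second difference as the symmetric second difference of $\hat{\mathcal{B}}_0^T$ along the straight perturbation $s \mapsto \frac{s}{T} H$. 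A second-order Taylor expansion of $L$, $\Phi$, and $\Psi$ along this segment cancels the odd-in-$\pm$ first-order terms and leaves integrated quadratic remainders controlled by $\|\nabla^2 L\|$, $\|\nabla^2 \Phi\|_{L^\infty}$, and $\|\nabla^2 \Psi\|_{L^\infty}$ times $|\tfrac{s}{T}H(\omega)|^2 + |\tfrac{1}{T}H(\omega)|^2$; integrating in $s$ and $\omega$ produces the desired constant $C(T)\|H\|_{\mathbb{H}}^2$ on the right.

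The lower bound is essentially free from the convexity of $\hat{\mathcal{B}}_0^T$ already proved in Theorem \ref{exist_uhat_thm}: taking the optimal curves $\gamma^{\pm}$ for $X \pm H$ and their midpoint $\gamma^0 = \tfrac{1}{2}(\gamma^{+} + \gamma^{-})$, which is a competitor for $X$ at time $T$, gives
\begin{equation*}
    2\hat{U}(T,X) \leq 2\hat{\mathcal{B}}_0^T(\gamma^0) \leq \hat{\mathcal{B}}_0^T(\gamma^{+}) + \hat{\mathcal{B}}_0^T(\gamma^{-}) = \hat{U}(T,X+H) + \hat{U}(T,X-H),
\end{equation*}
so the second difference is in fact nonnegative and the lower bound is trivially met with any $C \geq 0$.

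For continuity in the time variable I would combine Proposition \ref{euler_lagrange_thm_uhat} with a stability argument for the Euler--Lagrange flow in the spirit of Claim \ref{primary_thm_claim_2}: as $(t,X)$ varies, the minimizer $\gamma_t^{\,\cdot}[X]$ varies continuously in $AC_2$, and the envelope identity $\nabla_X \hat{U}(t,X) = \nabla_v L(X, \dot{\gamma}_t^{\,t}[X])$ (obtained by differentiating $\hat{U}(t,X) = \hat{\mathcal{B}}_0^t(\gamma_t^{\,\cdot}[X])$ in $X$, with only the boundary term $s = t$ surviving on account of the Euler--Lagrange equation) transports joint continuity from the minimizer to $\nabla_X \hat{U}$. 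The main obstacle is the upper-bound estimate: the standing hypotheses provide only a coercive lower bound on $L$ and a pointwise bound on $\nabla_q L$, with no direct upper bound on $\nabla^2 L$, so I would first derive an a priori bound on $(\gamma, \dot{\gamma})$ via the energy inequality $c_0\|\dot{\gamma}\|_{L^2}^2 \leq \hat{\mathcal{B}}_0^T(\gamma) + K_1 + K_2$ from the proof of Theorem \ref{exist_uhat_thm}, combined with the Poincaré inequality, and then use the $C^3$ regularity of $L$ to extract a local upper bound on $\nabla^2 L$ along the minimizing trajectory, uniform for $X$ in a bounded set.
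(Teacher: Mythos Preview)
Your proposal is correct and follows essentially the same route as the paper: upper bound via the linear competitors $\gamma \pm \frac{s}{t}H$ and a second-difference Taylor estimate, lower bound via convexity of $\hat{\mathcal{B}}_0^t$ applied to the midpoint of the two optimal curves, and time-continuity via the envelope identity $\nabla_X \hat{U}(t,X) = \nabla_v L(\gamma(t),\dot{\gamma}(t))$ together with stability of minimizers under convergence of the data. Your caution about the absence of a global upper bound on $\nabla^2 L$ is in fact more careful than the paper, which simply posits a constant $C_{\bar L}$ with $\nabla^2 \bar L \le C_{\bar L} I$ without further comment; also note that since your competitors satisfy $\gamma^{\pm}(0)=\gamma(0)$, the $\Psi$-term cancels in the second difference and $\|\nabla^2\Psi\|_{L^\infty}$ does not actually enter the upper bound.
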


\begin{proof}
    Denote
    \begin{align*}
        \bar{L}(X,V) &= \int_{\Omega} L(X(\omega), V(\omega)) \, d\omega \\
        \hat{\mathcal{F}}(X) &= \frac{1}{2} \int_{\Omega^2} \Phi(X(\omega) - X(\tilde{\omega})) \, d\omega d\tilde{\omega}
    \end{align*}

    And recall
    \begin{equation*}
        \hat{U}(t,X) = \inf_{\bar{\gamma}} \left\{ \int_0^t \bar{L}(\bar{\gamma}, \dot{\bar{\gamma}}) + \hat{\mathcal{F}}(\bar{\gamma}) \, d\tau + \mathcal{U}_{*}(\bar{\gamma}(0)) : \bar{\gamma}(t) = X \right\}
    \end{equation*} 
    
    Let $\gamma$ be optimal in $\hat{U}$ (in other words, $\bar{\gamma} = \gamma_t^\tau[X]$). That is,
    \begin{equation*}
        \hat{U}(t,X) = \int_0^t \bar{L}(\gamma, \dot{\gamma}) + \hat{\mathcal{F}}(\gamma) \, d\tau + \mathcal{U}_{*}(\gamma(0)) \quad\quad (\gamma(t) = X)
    \end{equation*}
    
    Now, we introduce a variation on $\gamma$: let $\tilde{\gamma}(\tau) = \gamma(\tau) + \frac{\tau}{t}h$. See that $\tilde{\gamma}(t) = \gamma(t) + h$, and $\tilde{\gamma}(0) = \gamma(0)$. Hence,
    \begin{align*}
        \hat{U}(t, X + h) &\leq \int_0^t \bar{L}\left(\gamma + \frac{\tau}{t}h, \dot{\gamma} + \frac{h}{t}\right) + \hat{\mathcal{F}}\left( \gamma + \frac{\tau}{t}h \right) \, d\tau + \mathcal{U}_{*} (\gamma(0)) \\
        \hat{U}(t, X - h) &\leq \int_0^t \bar{L}\left(\gamma - \frac{\tau}{t}h, \dot{\gamma} - \frac{h}{t}\right) + \hat{\mathcal{F}}\left( \gamma - \frac{\tau}{t}h \right) \, d\tau + \mathcal{U}_{*} (\gamma(0))
    \end{align*}
    
    Let $C_{\bar{L}}, C_{\hat{\mathcal{F}}}$ be such that $\nabla^2 \bar{L} \leq C_{\bar{L}}I$, $\nabla^2 \hat{\mathcal{F}} \leq C_{\hat{\mathcal{F}}}I$. Recall that
    \begin{equation*}
        f(x + h) + f(x - h) - 2f(x) \leq \max_{x} \{\langle \nabla^2 f(x) h, h \rangle \}
    \end{equation*}
    
    Combining these, we get
    \begin{align*}
        \hat{U}(t, X + h) + \hat{U}(t, X - h) - 2\hat{U}(t,X) &\leq \int_0^t C_{\bar{L}} \left\langle \begin{pmatrix} \frac{\tau h}{t} \\ \frac{h}{t} \end{pmatrix} , \begin{pmatrix} \frac{\tau h}{t} \\ \frac{h}{t} \end{pmatrix} \right\rangle + C_{\hat{\mathcal{F}}} \left\langle \frac{\tau h}{t}, \frac{\tau h}{t} \right\rangle \, d\tau \\
        &= \int_0^t C_{\bar{L}}\left( \frac{\tau^2 |h|^2}{t^2} + \frac{|h|^2}{t^2} \right) + C_{\hat{\mathcal{F}}}\left( \frac{\tau^2 |h|^2}{t^2} \right) \, d\tau \\
        &= \left. (C_{\bar{L}} + C_{\hat{\mathcal{F}}})\frac{\tau^3}{3t^2}|h|^2 + C_{\bar{L}}\frac{\tau}{t^2}|h|^2 \right]_{\tau = 0}^{\tau = t} \\
        &= \left( \frac{t}{3}(C_{\bar{L}} + C_{\hat{\mathcal{F}}}) + \frac{1}{t}C_{\bar{L}} \right)|h|^2
    \end{align*}
    
    Which implies that
    \begin{equation}
        \label{hard_case}
        \nabla_{XX} \hat{U}(t,X) \leq \frac{t}{3}(C_{\bar{L}} + C_{\hat{\mathcal{F}}}) + \frac{1}{t}C_{\bar{L}} \overset{\text{set}}{=} C_t
    \end{equation}
    
    For a lower bound on $\nabla_{XX} \hat{U}$, we simply consider optimal paths on its variations: let $\gamma^+$, $\gamma^-$ denote the optimal paths for $\hat{U}(t, X + h)$, $\hat{U}(t, X - h)$ respectively. Thus,
    \begin{align*}
        \hat{U}(t, X + h) &= \int_0^t \bar{L}(\gamma^+, \dot{\gamma}^+) + \hat{\mathcal{F}}(\gamma^+) \, d\tau + \mathcal{U}_*(\gamma^+(0)) \\
        \hat{U}(t, X - h) &= \int_0^t \bar{L}(\gamma^-, \dot{\gamma}^-) + \hat{\mathcal{F}}(\gamma^-) \, d\tau + \mathcal{U}_*(\gamma^-(0))
    \end{align*}
    
    Consider the path obtained by taking the average of the two optimal paths, $\frac{1}{2}(\gamma^+ + \gamma^-)$. Note that it has terminal position $X$ at time $t$, and hence
    \begin{equation*}
        \hat{U}(t,X) \leq \int_0^t \bar{L}\left( \frac{\gamma^+ + \gamma^-}{2}, \frac{\dot{\gamma}^+ + \dot{\gamma}^-}{2} \right) + \hat{\mathcal{F}}\left( \frac{\gamma^+ + \gamma^-}{2} \right) d\tau + \mathcal{U}_*\left( \frac{\gamma^+(0) + \gamma^-(0)}{2} \right)
    \end{equation*}
    
    It follows from the convexity of $\hat{B}_0^t$ that
    \begin{multline*}
        \hat{U}(t, X + h) + \hat{U}(t, X - h) \\
        = \int_0^t \bar{L}(\gamma^+, \dot{\gamma}^+) + \bar{L}(\gamma^-, \dot{\gamma}^-) + \hat{\mathcal{F}}(\gamma^+) + \hat{\mathcal{F}}(\gamma^-) \, d\tau + \mathcal{U}_*(\gamma^+(0)) + \mathcal{U}_*(\gamma^-(0)) \\
        \geq 2\int_0^t \bar{L}\left( \frac{\gamma^+ + \gamma^-}{2}, \frac{\dot{\gamma}^+ + \dot{\gamma}^-}{2} \right) + \hat{\mathcal{F}}\left( \frac{\gamma^+ + \gamma^-}{2} \right) d\tau + 2\,\mathcal{U}_*\left( \frac{\gamma^+(0) + \gamma^-(0)}{2} \right) \\
        \geq 2\hat{U}(t,X)
    \end{multline*}
    
    Thus we have that
    \begin{equation*}
        \hat{U}(t, X + h) + \hat{U}(t, X - h) - 2\hat{U}(t,X) \geq 0
    \end{equation*}
    
    Which implies
    \begin{equation}
        \label{lower_bound_uhat_2nd_derivative}
        \nabla_{XX} \hat{U}(t,X) \geq 0
    \end{equation}
    
    Combining (\ref{hard_case}) and (\ref{lower_bound_uhat_2nd_derivative}) results in
    \begin{equation}
        \label{hessian_uhat_bound}
        0 \leq \nabla_{XX} \hat{U}(t,X) \leq C_t
    \end{equation}
    
    By (\ref{hessian_uhat_bound}), we have that $\nabla_X \hat{U}$ exists and that $\nabla_X \hat{U}(t, \cdot)$ is $C_t$--Lipschitz (hence, continuous). We prove the following lemmas to aid us in guaranteeing continuity in the time input as well:
    
    \begin{lemma}
        \label{cont_of_seq_of_gamma}
        Let $\gamma_t^s[X](\omega)$ be the minimizer that obtains minimum $\hat{U}(t,X)$. Consider sequence of minimizers $(\gamma^s_t[X])_n(\omega)$ that obtain minimum $\hat{U}(t_n, X_n)$. Denoted $\gamma_n(s,\omega)$ $(\gamma_n : [0,T] \times \Omega \to \mathbb{H})$, it must satisfy the Euler-Lagrange Equation and following conditions:
        \begin{equation*}
            \begin{dcases}
                \frac{d}{ds} \nabla_v L(\gamma_n(s,\omega), \dot{\gamma}_n(s,\omega)) = \nabla_q L(\gamma_n(s,\omega), \dot{\gamma}_n(s,\omega)) + \int_{\Omega} \nabla \Phi (\gamma_n(s,\omega) - \gamma_n(s,\tilde{\omega})) \, d\tilde{\omega} \\
                \gamma_n(t,\omega) = X_n (\omega) \\
                \int_{\Omega} \nabla \Psi( \gamma_n(0,\omega) - \gamma_n(0,\tilde{\omega})) \, d\tilde{\omega} = \nabla_v L(\gamma_n(0,\omega), \dot{\gamma}_n(0,\omega))
            \end{dcases}
        \end{equation*}
        
        If $\lim_{n \to \infty} \gamma_n = \gamma_{\infty}$, then $\gamma_{\infty}$ is a minimizer and thus also satisfies the above (hence, the sequence $(\gamma_n)$ uniformly converges to $\gamma_{\infty}$).
    \end{lemma}
    
    \begin{proof}[Proof of Lemma \ref{cont_of_seq_of_gamma}.]
        First we utilize the following fact (let $E = [0,T] \times \Omega$):
        \begin{equation*}
            \int_{E} |\gamma_n(t,\omega)|^2 + |\dot{\gamma}_n(t,\omega)|^2 \, dt d\omega \leq \text{constant}
        \end{equation*}
        
        Therefore, up to a subsequence, $\gamma_n \rightharpoonup \gamma_{\infty}$, $\dot{\gamma}_n \rightharpoonup \dot{\gamma}_{\infty}$. Our goal is to show that given some minimizer $\gamma$ 
        \begin{equation*}
            \hat{U}(0, \gamma(0)) + \int_0^t \hat{L}(\gamma, \dot{\gamma}) \, ds \geq \hat{U}(0, \gamma_{\infty}(0)) + \int_0^t \hat{L}(\gamma_{\infty}, \dot{\gamma}_{\infty}) \, ds
        \end{equation*}
        
        We proceed by considering some path $\tilde{\gamma}_n$ knowing $\gamma(t) = X$, and set $\tilde{\gamma}_n(t) = X_n$ through: 
        \begin{equation*}
            \tilde{\gamma}_n(s) = 
            \begin{dcases}
                \gamma(s) & \text{if } 0 \leq s < t - \delta \\
                \gamma(s) + \frac{s - t + \delta}{\delta}(X_n - \gamma(s)) & \text{if } t - \delta \leq s \leq t
            \end{dcases}
        \end{equation*}
        
        As $\tilde{\gamma}_n$ is not completely optimal, we have
        \begin{align*}
            \hat{U}(0, \tilde{\gamma}_n(0)) + \int_0^t \hat{L}(\tilde{\gamma}_n, \dot{\tilde{\gamma}}_n) \, ds &\geq \hat{U}(0, \gamma_n(0)) + \int_0^{t_n} \hat{L}(\gamma_n, \dot{\gamma}_n) \, ds \\
            \hat{U}(0, \gamma(0)) + \int_0^{t - \delta} \hat{L}(\gamma, \dot{\gamma}) \, ds + \underset{\varepsilon}{\underbrace{\int_{t - \delta}^t \hat{L}(\tilde{\gamma}_n, \dot{\tilde{\gamma}}_n) \, ds}} &\geq \hat{U}(0, \gamma_n(0)) + \int_0^{t_n} \hat{L}(\gamma_n, \dot{\gamma}_n) \, ds \\
            \hat{U}(0, \gamma(0)) + \int_0^{t - \delta} \hat{L}(\gamma, \dot{\gamma}) \, ds + \varepsilon &\geq \hat{U}(0, \gamma_n(0)) + \int_0^{t_n} \hat{L}(\gamma_n, \dot{\gamma}_n) \, ds
        \end{align*}
        
        We utilize the convexity and thus weak lower semi-continuity of $\hat{\mathcal{B}}_0^t$ and take the limit as $n \to \infty$ on both sides to obtain
        \begin{align*}
            \hat{U}(0, \gamma(0)) + \int_0^{t - \delta} \hat{L}(\gamma, \dot{\gamma}) \, ds + \varepsilon &\geq \lim_{n \to \infty} \hat{U}(0, \gamma_n(0)) + \int_0^{t_n} \hat{L}(\gamma_n, \dot{\gamma}_n) \, ds \\
            &\geq \hat{U}(0, \gamma_{\infty}(0)) + \int_0^t \hat{L}(\gamma_{\infty}, \dot{\gamma}_{\infty}) \, ds
        \end{align*}
        
        By letting $\delta \to 0$, it follows that
        \begin{equation*}
            \hat{U}(0, \gamma(0)) + \int_0^t \hat{L}(\gamma, \dot{\gamma}) \, ds \geq \hat{U}(0, \gamma_{\infty}(0)) + \int_0^t \hat{L}(\gamma_{\infty}, \dot{\gamma}_{\infty}) \, ds
        \end{equation*}
        
        But as $\gamma$ is optimal, it follows that $\gamma_{\infty}$ is also optimal, and thus it is a minimizer.
    \end{proof}
    
    \begin{lemma}
        \label{derivative_hatL_derivative_L}
        For all $\omega \in \Omega$,
        \begin{equation*}
            \nabla_V \hat{L}(X(\omega), V(\omega)) = \nabla_v L(X(\omega), V(\omega))
        \end{equation*}
    \end{lemma}
        
    \begin{proof}[Proof of Lemma \ref{derivative_hatL_derivative_L}]
        Let $A,B \in \mathbb{H}$. Recall by definition we have
        \begin{equation}
            \label{hatL_exp_def}
            \hat{L}(X + A, V + B) = \int_{\Omega} L((X + A)(\omega), (V + B)(\omega)) \, d\omega + \frac{1}{2}\int_{\Omega^2} \Phi((X + A)(\omega) - (X + A)(\tilde{\omega})) \, d\omega d\tilde{\omega}
        \end{equation}
        
        Taylor's Theorem yields
        \begin{multline}
            \label{eqn_hatL_taylor}
            \hat{L}(X + A, V + B) = \hat{L}(X, V) + \langle \nabla_X \hat{L}(X(\omega),V(\omega)), A \rangle_{L^2(\Omega)} + \langle \nabla_V \hat{L}(X(\omega), V(\omega)), B \rangle_{L^2(\Omega)} \\
            + o(\| A \|) + o(\| B \|) 
        \end{multline}
        
        Where
        \begin{equation*}
            \langle P, Q \rangle_{L^2(\Omega)} = \int_{\Omega} \langle P(\omega), Q(\omega) \rangle \, d\omega
        \end{equation*}
        
        Likewise, 
        \begin{multline}
            \label{eqn_L_taylor}
            L((X + A)(\omega), (V + B)(\omega)) = L(X(\omega), V(\omega)) + \langle \nabla_q L(X(\omega), V(\omega)), A \rangle + \langle \nabla_v L(X(\omega), V(\omega)), B \rangle \\
            + o(\| A \|) + o(\| B \|) 
        \end{multline}
            
        Combining (\ref{hatL_exp_def}), (\ref{eqn_hatL_taylor}), and (\ref{eqn_L_taylor}),
        \begin{multline*}
            \hat{L}(X, V) + \langle \nabla_X \hat{L}(X(\omega), V(\omega)), A \rangle_{L^2(\Omega)} + \langle \nabla_V \hat{L}(X(\omega), V(\omega)), B \rangle_{L^2(\Omega)} + o(\| A \|) + o(\| B \|) = \\
            \left( \int_{\Omega} L(X(\omega), V(\omega)) \, d\omega + \frac{1}{2}\int_{\Omega^2} \Phi((X + A)(\omega) - (X + A)(\tilde{\omega})) \, d\omega d\tilde{\omega} \right) \\
            + \int_{\Omega} \langle \nabla_q L(X(\omega), V(\omega)), A \rangle \, d\omega + \int_{\Omega} \langle \nabla_v L(X(\omega), V(\omega)), B \rangle \, d\omega + o(\| A \|) + o(\| B \|) 
        \end{multline*}
        
        Which, for $A \equiv \mathbf{0}$, we obtain
        \begin{multline*}
            \hat{L}(X, V) + \langle \nabla_V \hat{L}(X(\omega), V(\omega)), B \rangle_{L^2(\Omega)} + o(\| B \|) = \left( \int_{\Omega} L(X(\omega), V(\omega)) \, d\omega \right. \\
            \left. + \frac{1}{2}\int_{\Omega^2} \Phi(X(\omega) - X(\tilde{\omega})) \, d\omega d\tilde{\omega} \right) + \int_{\Omega} \langle \nabla_v L(X(\omega), V(\omega)), B \rangle \, d\omega + o(\| B \|) 
        \end{multline*}
        
        Which we may rewrite as
        \begin{equation*}
            \hat{L}(X, V) + \langle \nabla_V \hat{L}(X(\omega), V(\omega)), B \rangle_{L^2(\Omega)} + o(\| B \|) = \hat{L}(X, V) + \langle \nabla_v L(X(\omega), V(\omega)), B \rangle_{L^2(\Omega)} + o(\| B \|)
        \end{equation*}
        
        By identification, we obtain
        \begin{equation*}
            \langle \nabla_V \hat{L}(X(\omega), V(\omega)), B \rangle_{L^2(\Omega)} = \langle \nabla_v L(X(\omega), V(\omega)), B \rangle_{L^2(\Omega)} 
        \end{equation*}
        
        As the above is true for all $B \in \mathbb{H}$, we obtain the desired result.
    \end{proof}
    
    We also prove the following claim:
    \begin{claim}
        \label{uhat_claim_X_derivative}
        We claim that
        \begin{equation*}
            \nabla_X \hat{U}(t, \gamma(t)) = \nabla_v L(\gamma(t), \dot{\gamma}(t))
        \end{equation*}
        
        Where $\gamma(s) = \gamma_t^s[X]$.
    \end{claim}
    \begin{proof}[Proof of Claim \ref{uhat_claim_X_derivative}]
        Let us introduce a variation on $\gamma$, $\tilde{\gamma}$ where
        \begin{equation*}
            \tilde{\gamma}(\tau) = \gamma(\tau) + \frac{\tau}{t}h \quad \left(\alpha(\tau) \overset{\text{set}}{=} \frac{\tau}{t} \right)
        \end{equation*}
        
        Note that $\tilde{\gamma}(0) = \gamma(0)$, $\tilde{\gamma}(t) = \gamma(t) + h$. Next, we define $A$:
        \begin{equation*}
            A(h) = \hat{U}(t, \gamma(t) + h) - \int_0^t \hat{L}(\gamma + \alpha h, \dot{\gamma} + \dot{\alpha}h) \, d\tau - \mathcal{U}_*(\gamma(0))
        \end{equation*}
        
        See that $A(h) \leq 0 \; \forall h$. Particularly, $A(\mathbf{0}) = 0 \implies h = \mathbf{0}$ is a maximizer of $A$. Therefore, $\nabla A(\mathbf{0}) = 0$ which implies
        \begin{equation*}
            0 = \nabla_X \hat{U}(t, \gamma(t)) \alpha(t) - \int_0^t \nabla_X \hat{L}(\gamma, \dot{\gamma})\alpha + \nabla_V \hat{L}(\gamma, \dot{\gamma})\dot{\alpha} \, d\tau
        \end{equation*}
        
        Via Integration by Parts, we rewrite
        \begin{align*}
            \int_0^t \nabla_V \hat{L}(\gamma, \dot{\gamma}) \dot{\alpha} \, d\tau &= \left.\nabla_V \hat{L}(\gamma(\tau), \dot{\gamma}(\tau)) \alpha(\tau)  \right]_{0}^{t} - \int_0^t \frac{d}{d\tau} \nabla_V \hat{L}(\gamma, \dot{\gamma}) \alpha \, d\tau \\
            &= \nabla_V \hat{L}(\gamma(t), \dot{\gamma}(t))\alpha(t) - \int_0^t \frac{d}{d\tau} \nabla_V \hat{L}(\gamma, \dot{\gamma}) \alpha \, d\tau 
        \end{align*}
        
        And hence
        \begin{equation*}
            0 = \nabla_X \hat{U}(t, \gamma(t))\alpha(t) - \nabla_V \hat{L}(\gamma(t), \dot{\gamma}(t))\alpha(t) - \int_0^t \left(\nabla_X \hat{L}(\gamma, \dot{\gamma}) - \frac{d}{d\tau} \nabla_V \hat{L}(\gamma, \dot{\gamma})\right)\alpha \, d\tau
        \end{equation*}
        
        By Proposition \ref{euler_lagrange_thm_uhat}, we get
        \begin{equation*}
            \int_0^t \left(\nabla_X \hat{L}(\gamma, \dot{\gamma}) - \frac{d}{d\tau} \nabla_V \hat{L}(\gamma, \dot{\gamma})\right)\alpha \, d\tau = 0
        \end{equation*}
        
        (You may confirm that the above expression is indeed the Euler--Lagrange equation.) Hence (after cancelling $\alpha(t)$),
        \begin{equation*}
            \nabla_X \hat{U} (t, \gamma(t)) = \nabla_V \hat{L}(\gamma(t), \dot{\gamma}(t))
        \end{equation*}
        
        Let $X = \gamma(t, \cdot)$ in Lemma \ref{derivative_hatL_derivative_L}. Then, the result follows.
    \end{proof}
    
    Thus, by Lemma \ref{cont_of_seq_of_gamma}, Claim \ref{uhat_claim_X_derivative} we have that as $\gamma_t^t[X] = X, \dot{\gamma}_t^t[X] = \dot{X}$, if $t_n \to t$
    \begin{align*}
        \lim_{n \to \infty} \nabla_X \hat{U}(t_n, X) &= \lim_{n \to \infty} \nabla_v L(\gamma_t^{t_n}[X], \dot{\gamma}^{t_n}_{t}[X]) \\
        &= \nabla_v L(X, \dot{X}) \\
        &= \nabla_v L(\gamma(t), \dot{\gamma}(t)) \\
        &= \nabla_X \hat{U}(t,X)
    \end{align*}
    
    This implies that $\nabla_X \hat{U}$ is also continuous in the time input, which concludes the proof.
\end{proof}

With this we may prove the existence and continuity of $\partial_t \hat{U}$:
\begin{theorem}
    \label{first_derivative_t_uhat}
    Given the existence, continuity of $\nabla_X \hat{U}(t,X)$, then $\partial_t\hat{U}(t,X)$ exists and is continuous.
\end{theorem}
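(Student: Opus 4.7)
My plan is to derive the Hamilton--Jacobi equation
\begin{equation*}
    \partial_t \hat{U}(t, X) = -\hat{H}\bigl(X, \nabla_X \hat{U}(t,X)\bigr)
\end{equation*}
as an explicit formula for the time derivative. Since $\nabla_X \hat{U}$ is continuous by Theorem~\ref{first_derivative_x_uhat}, and $(X, P) \mapsto \hat{H}(X, P)$ is continuous because $H \in C^3$, such a formula immediately yields the continuity of $\partial_t \hat{U}$. The existence of the time derivative will be obtained by sandwiching the one-sided difference quotient $[\hat{U}(t+h,X) - \hat{U}(t,X)]/h$ between matching upper and lower bounds that both tend to $-\hat{H}(X, \nabla_X \hat{U}(t,X))$ as $h \to 0^+$; the case $h \to 0^-$ is entirely symmetric.

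\textbf{Upper bound.} Fix $V \in \mathbb{H}$ and construct a competitor for $\hat{U}(t+h, X)$ by concatenating the optimal trajectory for $\hat{U}(t, X - hV)$ on $[0,t]$ with the affine segment $\tau \mapsto X - hV + (\tau - t) V$ on $[t, t+h]$. This competitor reaches $X$ at time $t+h$, so $\hat{U}(t+h, X) \leq \hat{U}(t, X - hV) + \int_t^{t+h} \hat{L}\bigl(X - hV + (\tau - t)V,\, V\bigr) \, d\tau$. Subtracting $\hat{U}(t,X)$, dividing by $h$, and letting $h \to 0^+$ using the existence of $\nabla_X \hat{U}$ together with continuity of $\hat{L}$ gives
\begin{equation*}
    \limsup_{h \to 0^+} \frac{\hat{U}(t+h, X) - \hat{U}(t,X)}{h} \leq \hat{L}(X, V) - \langle \nabla_X \hat{U}(t, X), V \rangle.
\end{equation*}
Taking the infimum over $V$ and invoking the pointwise Legendre duality $L^{*}(q, \cdot) = H(q, \cdot)$ (the $\Phi$-contribution in $\hat{L}$ is independent of $V$) identifies $\inf_V \{\hat{L}(X, V) - \langle P, V \rangle\} = -\hat{H}(X, P)$, which furnishes the upper bound.

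\textbf{Lower bound.} Let $\gamma_h = \gamma_{t+h}^{\tau}[X]$ denote the unique minimizer for $\hat{U}(t+h, X)$. Splitting $\hat{\mathcal{B}}_0^{t+h}(\gamma_h)$ across $[0,t]$ and $[t, t+h]$ and observing that $\gamma_h|_{[0,t]}$ is admissible (and in fact optimal by uniqueness) for $\hat{U}(t, \gamma_h(t))$ produces the dynamic programming identity
\begin{equation*}
    \hat{U}(t+h, X) = \hat{U}(t, \gamma_h(t)) + \int_t^{t+h} \hat{L}(\gamma_h, \dot{\gamma}_h) \, d\tau.
\end{equation*}
Subtracting $\hat{U}(t,X)$, expanding $\hat{U}(t, \gamma_h(t)) - \hat{U}(t, X)$ by the fundamental theorem of calculus applied to the $C^1$ map $Y \mapsto \hat{U}(t, Y)$, and rewriting $\gamma_h(t) - X = -\int_t^{t+h} \dot{\gamma}_h \, d\tau$ converts the difference into an integral on $[t, t+h]$ of $\hat{L}(\gamma_h, \dot{\gamma}_h) - \langle \nabla_X \hat{U}(t, \xi_h(\tau)), \dot{\gamma}_h \rangle$ for an intermediate $\xi_h(\tau)$ between $X$ and $\gamma_h(t)$. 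The Legendre inequality (again pointwise, then integrated) bounds each integrand below by $-\hat{H}(\gamma_h(\tau), \nabla_X \hat{U}(t, \xi_h(\tau)))$. Dividing by $h$, invoking Lemma~\ref{cont_of_seq_of_gamma} to conclude $\gamma_h(\tau), \xi_h(\tau) \to X$ on $[t, t+h]$, and using the continuity of $\hat{H}$ delivers the matching $\liminf \geq -\hat{H}(X, \nabla_X \hat{U}(t, X))$.

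\textbf{Main obstacle.} The analytic heart of the argument is the uniform convergence $\gamma_h(\tau) \to X$ as $h \to 0^+$, which is precisely what Lemma~\ref{cont_of_seq_of_gamma} supplies: every weak cluster point of the family $\{\gamma_h\}$ is itself a minimizer, hence equals $\gamma_t^\tau[X]$ by the uniqueness half of Theorem~\ref{exist_uhat_thm}, forcing convergence of the whole family. A secondary bookkeeping point is the verification of the dynamic programming identity in the lower bound, which follows from additivity of the action across $[0,t] \cup [t, t+h]$ combined with uniqueness of minimizers. Once these are in place, the Legendre computation, interchange of limits, and continuity statements for $\hat{L}$ and $\hat{H}$ are routine, and the continuity of $\partial_t \hat{U}$ reduces to composing the continuous maps $X \mapsto (X, \nabla_X \hat{U}(t, X))$ and $(X, P) \mapsto -\hat{H}(X, P)$.
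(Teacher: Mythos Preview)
Your argument is correct but takes a genuinely different route from the paper. The paper works \emph{backward} from the single minimizer $\gamma = \gamma_t^s[X]$ for $\hat{U}(t,X)$: it invokes the dynamic programming identity $\hat{U}(t,\gamma(t)) - \hat{U}(t-h,\gamma(t-h)) = \int_{t-h}^{t}\hat{L}(\gamma,\dot\gamma)\,d\tau$, inserts and removes $\hat{U}(t-h,\gamma(t))$, and passes to the limit in each piece using the already-established continuity of $\nabla_X\hat{U}$. This yields the explicit formula $\partial_t\hat{U}(t,X) = \langle\nabla_X\hat{U}(t,X),\dot\gamma(t)\rangle - \hat{L}(X,\dot\gamma(t))$ in one step; the identification with $-\hat{H}(X,\nabla_X\hat{U})$ is deferred to Proposition~\ref{uhat_hje}. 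You instead run the standard viscosity-style sandwich on the \emph{forward} quotient: an affine competitor gives the upper bound, while the minimizer $\gamma_h$ for $\hat{U}(t+h,X)$ together with the Legendre inequality gives the lower bound, and the Hamilton--Jacobi form falls out directly. The paper's approach is shorter and uses only the fixed minimizer (so it needs no compactness argument for a family $\{\gamma_h\}$ inside this proof, though it does require $\dot\gamma(t)$ to make pointwise sense); your approach is more robust in that it never evaluates $\dot\gamma$ at a single time, and it unifies the existence of $\partial_t\hat{U}$ with the HJE rather than treating them separately. One small bookkeeping point: in your lower bound the intermediate point from the fundamental theorem of calculus is $\xi_h(s)$ with $s\in[0,1]$, not $\xi_h(\tau)$ with $\tau\in[t,t+h]$, so the Legendre step should be applied for each fixed $s$ and then averaged over $s$; this is harmless but worth writing cleanly.
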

\begin{proof}
    First we note that, for $\gamma(s,\omega) = \gamma_t^s[X](\omega)$, $\gamma(t) = X$ and thus
    \begin{equation*}
        \hat{U}(t,\gamma(t)) - \hat{U}(t - h, \gamma(t - h)) = \int_{t - h}^t \hat{L}(\gamma, \dot{\gamma}) \, d\tau
    \end{equation*}
    
    Our goal is to show that the following limit exists:
    \begin{equation*}
        \lim_{h \to 0} \frac{\hat{U}(t, \gamma(t)) - \hat{U}(t - h, \gamma(t))}{-h}
    \end{equation*}
    
    We note that by adding and subtracting $\hat{U}(t - h, \gamma(t))$
    \begin{equation*}
        [\hat{U}(t,\gamma(t)) - \hat{U}(t - h, \gamma(t))] + [\hat{U}(t - h, \gamma(t)) - \hat{U}(t - h, \gamma(t - h))] = \int_{t - h}^t \hat{L}(\gamma, \dot{\gamma}) \, d\tau
    \end{equation*}
    
    We divide by $-h$ on both sides, rearrange and obtain:
    \begin{equation}
    \label{uhat_difference_quotient}
        \frac{\hat{U}(t, \gamma(t)) - \hat{U}(t - h, \gamma(t))}{-h} = \frac{\hat{U}(t - h, \gamma(t)) - \hat{U}(t - h, \gamma(t - h))}{h} - \frac{1}{h}\int_{t - h}^t \hat{L}(\gamma, \dot{\gamma}) \, d\tau
    \end{equation}
    
    Using Taylor's Theorem,
    \begin{multline*}
        \frac{\hat{U}(t - h, \gamma(t)) - \hat{U}(t - h, \gamma(t - h))}{h} = \int_0^1 \biggl\langle \nabla_X \hat{U}(t - h, \gamma(t - h) + s(\gamma(t) - \gamma(t - h))),  \\
        \frac{\gamma(t) - \gamma(t - h)}{h} \biggr\rangle \, ds
    \end{multline*}
    
    In which, if $h \to 0$, by the continuity of $\nabla_X \hat{U}$ we have that
    \begin{align*}
        \lim_{h \to 0} \frac{\hat{U}(t - h, \gamma(t)) - \hat{U}(t - h, \gamma(t - h))}{h} &= \int_0^1 \langle \nabla_X \hat{U}(t, \gamma(t)), \dot{\gamma}(t) \rangle \, ds \\
        &= \langle \nabla_X \hat{U}(t, \gamma(t)), \dot{\gamma}(t) \rangle 
    \end{align*}
    
    And also note
    \begin{equation*}
        \lim_{h \to 0} -\frac{1}{h} \int_{t - h}^t \hat{L}(\gamma(\tau), \dot{\gamma}(\tau)) \, d\tau = -\hat{L}(\gamma(t), \dot{\gamma}(t))
    \end{equation*}
    
    Hence, letting $h \to 0$ in (\ref{uhat_difference_quotient}) gives us
    \begin{align*}
        \lim_{h \to 0} \frac{\hat{U}(t, \gamma(t)) - \hat{U}(t - h, \gamma(t))}{-h} = \langle \nabla_X \hat{U}(t, \gamma(t), \dot{\gamma}(t) \rangle - \hat{L}(\gamma(t), \dot{\gamma}(t))
    \end{align*}
    
    Hence, we conclude the limit exists. By definition, this limit simply differentiates with respect to time, hence we have demonstrated the existence of $\partial_t \hat{U}$. Furthermore, as we can write $\partial_t \hat{U}$ in terms of continuous functions, we conclude it is continuous, which concludes the proof.
\end{proof}

\subsection{The Hamilton--Jacobi Equation for \texorpdfstring{$\hat{U}$}{U-hat}}

Finally, with the regularity of $\hat{U}$, we may now demonstrate that it satisfies its HJE:
\begin{proposition}
    \label{uhat_hje}
    Given the existence of $\partial_t \hat{U}(t,X), \nabla_X \hat{U}(t,X) \in C$, we have
    \begin{equation*}
        \partial_t \hat{U}(t,X) + \hat{H}(X, \nabla_X \hat{U}(t,X)) = 0
    \end{equation*}
\end{proposition}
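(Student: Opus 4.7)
The plan is to combine the pointwise identity for $\partial_t \hat{U}$ already derived in Theorem \ref{first_derivative_t_uhat} with the Legendre duality between $\hat{L}$ and $\hat{H}$ in the velocity/momentum variable. Let $\gamma(s) = \gamma_t^s[X]$ denote the optimal trajectory, so that $\gamma(t) = X$.

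First, I would read off from the final computation in the proof of Theorem \ref{first_derivative_t_uhat} the identity
\begin{equation*}
    \partial_t \hat{U}(t,X) \;=\; \hat{L}\bigl(X,\dot{\gamma}(t)\bigr) \;-\; \bigl\langle \nabla_X \hat{U}(t,X),\,\dot{\gamma}(t)\bigr\rangle_{L^2(\Omega)},
\end{equation*}
which follows by rearranging the limit expression at the end of that proof (recall the difference quotient was divided by $-h$, so the identity for $\partial_t\hat U$ carries the opposite sign). This reduces the proposition to showing that the right hand side equals $-\hat{H}(X,\nabla_X \hat{U}(t,X))$.

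Second, I would establish that $\hat{H}(X,\cdot)$ is the Legendre conjugate of $\hat{L}(X,\cdot)$ on $\mathbb{H}$. Since the interaction term $\tfrac{1}{2}\int_{\Omega^2}\Phi(X(\omega)-X(\tilde{\omega}))\,d\omega d\tilde{\omega}$ appears with opposite signs in $\hat{L}$ and $\hat{H}$ and is independent of the second argument, it is enough to check
\begin{equation*}
    \sup_{V \in \mathbb{H}} \left\{ \int_{\Omega} \langle P(\omega),V(\omega)\rangle\,d\omega - \int_{\Omega} L(X(\omega),V(\omega))\,d\omega \right\} = \int_{\Omega} H(X(\omega),P(\omega))\,d\omega,
\end{equation*}
which is obtained by exchanging the supremum with the $\omega$-integral (valid by pointwise optimization / measurable selection) and invoking the hypothesis $L^{*}(q,\cdot)=H(q,\cdot)$. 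Consequently, the Young/Fenchel equality gives $\hat{H}(X,P) = \langle P,V\rangle_{L^2(\Omega)} - \hat{L}(X,V)$ whenever $P = \nabla_V \hat{L}(X,V)$.

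Third, I would invoke Claim \ref{uhat_claim_X_derivative} together with Lemma \ref{derivative_hatL_derivative_L} to obtain
\begin{equation*}
    \nabla_X \hat{U}(t,X) \;=\; \nabla_v L\bigl(X,\dot{\gamma}(t)\bigr) \;=\; \nabla_V \hat{L}\bigl(X,\dot{\gamma}(t)\bigr),
\end{equation*}
which identifies $\dot{\gamma}(t)$ as precisely the velocity dual to the momentum $P := \nabla_X \hat{U}(t,X)$. Applying the Young equality from the previous step,
\begin{equation*}
    \hat{H}\bigl(X,\nabla_X \hat{U}(t,X)\bigr) \;=\; \bigl\langle \nabla_X \hat{U}(t,X),\,\dot{\gamma}(t)\bigr\rangle_{L^2(\Omega)} \;-\; \hat{L}\bigl(X,\dot{\gamma}(t)\bigr),
\end{equation*}
and adding this to the identity of the first step yields the desired $\partial_t \hat{U}(t,X) + \hat{H}(X,\nabla_X \hat{U}(t,X)) = 0$.

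The main obstacle is the second step: the Legendre duality has to be transferred from the finite-dimensional pointwise statement $L^{*}(q,\cdot)=H(q,\cdot)$ up to an $L^2(\Omega)$-Legendre duality between $\hat{L}$ and $\hat{H}$, which requires a careful justification of swapping the supremum with the integral. Once this lifting is in place, the remainder is a clean assembly of Theorem \ref{first_derivative_t_uhat}, Claim \ref{uhat_claim_X_derivative}, and Lemma \ref{derivative_hatL_derivative_L}.
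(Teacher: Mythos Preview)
Your proposal is correct and in substance follows the same path as the paper: obtain the identity $\partial_t\hat U=\hat L(X,\dot\gamma(t))-\langle\nabla_X\hat U,\dot\gamma(t)\rangle$, identify $\nabla_X\hat U(t,X)=\nabla_v L(X,\dot\gamma(t))$ via Claim~\ref{uhat_claim_X_derivative}, and then convert the Lagrangian expression into $-\hat H(X,\nabla_X\hat U)$ using Legendre duality. The paper re-derives the first identity by differentiating $f(s)=\hat U(s,\gamma(s))-\int_0^s\hat L(\gamma,\dot\gamma)\,d\tau-\hat U(0,\gamma(0))\equiv 0$ rather than quoting Theorem~\ref{first_derivative_t_uhat}, but this is the same computation.

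The one genuine difference is your Step~2. You propose to first lift $L^{*}(q,\cdot)=H(q,\cdot)$ to a full Legendre duality $\hat L^{*}(X,\cdot)=\hat H(X,\cdot)$ on $\mathbb H$, which is why you flag the sup/integral interchange as an obstacle. The paper sidesteps this entirely: it proves the pointwise Lemma~\ref{uhat_hje_lemma_2}, namely $p_0=\nabla_v L(q_0,v_0)\Longleftrightarrow L(q_0,v_0)+H(q_0,p_0)=\langle v_0,p_0\rangle$, applies it for each $\omega$ with $p_0=\nabla_X\hat U(t,X)(\omega)$ and $v_0=\dot\gamma(t,\omega)$, and then simply integrates in $\omega$ and appends the $\Phi$-term. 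In other words, only the \emph{equality case} of Fenchel--Young is needed, and that integrates trivially without any measurable-selection argument. Your route works, but the obstacle you highlight is avoidable: you can drop the full $L^2$ conjugacy and just use the pointwise equality plus integration, exactly as the paper does.
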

\begin{proof}
    First note that
    \begin{equation*}
        \nabla_v L(q, a) = b \iff \nabla_p H(q,b) = a
    \end{equation*}
    
    That is, $\nabla_v L(q, \cdot)$ and $\nabla_p H(q, \cdot)$ are inverses of each other. Set $\gamma(s, \cdot) \equiv \gamma_t^s[X](\cdot)$. Recall the Euler--Lagrange Equation
    \begin{equation}
        \label{euler_lagrange_eq_uhat}
        \frac{d}{ds} \nabla_v L (\gamma(s, \omega), \dot{\gamma}(s,\omega)) = \nabla_q L(\gamma(s,\omega), \dot{\gamma}(s,\omega)) + \int_{\Omega} \nabla \Phi(\gamma(s,\omega) - \gamma(s,\tilde{\omega})) \, d\tilde{\omega}
    \end{equation}
    
    We aim to rewrite this in a Hamiltonian System: 
    \begin{equation}
        \label{eqn_z_hsys_uhat}
        Z(s,\omega) \overset{\text{set}}{=} \nabla_v L(\gamma(s, \omega), \dot{\gamma}(s,\omega))
    \end{equation}
    
    Thus,
    \begin{equation}
        \label{eqn_dotgamma_hsys_uhat}
        \dot{\gamma}(s,\omega) = \nabla_p H(\gamma(s,\omega), Z(s,\omega))
    \end{equation}
    
    We now prove two useful lemmas.

    \begin{lemma}
        \label{uhat_hje_lemma_1}
        If $L(q_0, v_0) + H(q_0, p_0) = \langle v_0, p_0 \rangle$, then
        \begin{equation*}
            \nabla_q L(q_0, v_0) = -\nabla_q H(q_0, p_0)
        \end{equation*}
    \end{lemma}
    \begin{proof}[Proof of Lemma \ref{uhat_hje_lemma_1}.]
        Set
        \begin{equation*}
            f(q) = L(q, v_0) + H(q, p_0) - \langle v_0, p_0 \rangle
        \end{equation*}
        
        By the hypothesis, we have that
        \begin{equation*}
            f(q) \geq 0 \quad \forall q
        \end{equation*}
        
        As $f(q_0) = 0$, this implies that $q_0$ is a minimizer of $f$. That is,
        \begin{equation*}
            0 = \nabla f(q_0) = \nabla_q L(q_0, v_0) + \nabla_q H(q_0, p_0)
        \end{equation*}
        
        The result follows.
    \end{proof}
    
    \begin{lemma}
        \label{uhat_hje_lemma_2} $L(q_0, v_0) + H(q_0, p_0) = \langle v_0, p_0 \rangle$ if and only if
        \begin{equation*}
            p_0 = \nabla_v L(q_0,v_0) \text{ or } v_0 = \nabla_p H(q_0,p_0)
        \end{equation*}
    \end{lemma}
    \begin{proof}[Proof of Lemma \ref{uhat_hje_lemma_2}.]
        We prove both directions.
        
        \underline{($\implies$)}
        
        Recall
        \begin{align*}
            H(q_0, p_0) &= \sup_{v} \{ \langle v, p_0 \rangle - L(q_0, v) \} \\
            L(q_0, v_0) &= \sup_{p} \{ \langle v_0, p \rangle - H(q_0, p) \}
        \end{align*}
        
        Therefore, we define $f$ as
        \begin{equation*}
            f(v) \overset{\text{set}}{=} L(q_0, v) + H(q_0, p_0) - \langle v, p_0 \rangle \geq 0 \quad \forall v
        \end{equation*}
        
        By the hypothesis, at $v = v_0$, $f$ attains its minimum, i.e.,
        \begin{equation*}
            0 = f'(v_0) = \nabla_v L(q_0, v_0) - \langle 1, p_0 \rangle \implies p_0 = \nabla_v L(q_0, v_0)
        \end{equation*}
        
        Or instead, we may define $g$ as
        \begin{equation*}
            g(p) \overset{\text{set}}{=} L(q_0, v_0) + H(q_0, p) - \langle v_0, p \rangle \geq 0 \quad \forall p
        \end{equation*}
        
        By the hypothesis, at $p = p_0$, $g$ attains its minimum, i.e.,
        \begin{equation*}
            0 = g'(p_0) = \nabla_p H(q_0, p_0) - \langle v_0, 1 \rangle \implies v_0 = \nabla_p H(q_0, p_0)
        \end{equation*}
        
        Which proves the forward direction.
        
        \newpage
        \underline{($\impliedby$)}
        
        Suppose $v_0 = \nabla_p H(q_0, p_0)$. Define $h$ as
        \begin{equation*}
            h(v, p) = L(q_0, v) + H(q_0, p) - \langle v, p \rangle
        \end{equation*}
        
        Then
        \begin{align*}
            \nabla_p h(v, p) &= \nabla_p H(q_0, p) - \langle v, 1 \rangle \\ 
            &= \nabla_p H(q_0, p) - v \\
            \nabla_p h(v_0, p) &= \nabla_p H(q_0, p) - \nabla_p H(q_0, p_0)
        \end{align*}
        
        Hence $p_0$ is a critical point of $p \mapsto h(v_0, p)$. Therefore, for all $p$ we can conclude
        \begin{align*}
            h(v_0, p_0) &\leq h(v_0, p) \\
            L(q_0, v_0) + H(q_0, p_0) - \langle v_0, p_0 \rangle &\leq L(q_0, v_0) + H(q_0, p) - \langle v_0, p \rangle \\
            \langle v_0, p_0 \rangle - H(q_0, p_0) &\geq \langle v_0, p \rangle - H(q_0, p)
        \end{align*}
        
        Which implies that
        \begin{align*}
            \langle v_0, p_0 \rangle - H(q_0, p_0) &= \sup_{p} \{ \langle v_0, p \rangle - H(q_0, p) \} \\
            &= L(q_0, v_0)
        \end{align*}
        
        Or, if we supposed that $p_0 = \nabla_v L(q_0, v_0)$, then
        \begin{align*}
            \nabla_v h(v, p) &= \nabla_v L(q_0, v) - \langle 1, p \rangle \\
            &= \nabla_v L(q_0, v) - p \\
            \nabla_v h(v, p_0) &= \nabla_v L(q_0, v) - \nabla_v L(q_0, v_0)
        \end{align*}
        
        Hence $v_0$ is a critical point of $v \mapsto h(v, p_0)$. Therefore for all $v$ we can conclude
        \begin{align*}
            h(v_0, p_0) &\leq h(v, p_0) \\
            L(q_0, v_0) + H(q_0, p_0) - \langle v_0, p_0 \rangle &\leq L(q_0, v) + H(q_0, p_0) - \langle v, p_0 \rangle \\
            \langle v_0, p_0 \rangle - L(q_0, v_0) &\geq \langle v, p_0 \rangle - L(q_0, v) 
        \end{align*}
        
        Which implies that
        \begin{align*}
            \langle v_0, p_0 \rangle - L(q_0, p_0) &= \sup_{v} \{ \langle v, p_0 \rangle - L(q_0, v) \} \\
            &= H(q_0, v_0)
        \end{align*}
        
        Which proves the backward direction.
    \end{proof}
    
    By Lemma \ref{uhat_hje_lemma_2}, (\ref{eqn_z_hsys_uhat}) implies
    \begin{equation*}
        L(\gamma, \dot{\gamma}) + H(\gamma, Z) = \langle \dot{\gamma}, Z \rangle
    \end{equation*}
    
    Which, when combined with Lemma \ref{uhat_hje_lemma_1},
    \begin{equation}
        \label{lagrange_to_hamiltonian_uhat}
        \nabla_q L (\gamma, \dot{\gamma}) = -\nabla_q H(\gamma, Z)
    \end{equation}
    
    Thus, by (\ref{euler_lagrange_eq_uhat}), (\ref{eqn_z_hsys_uhat}), (\ref{eqn_dotgamma_hsys_uhat}), and (\ref{lagrange_to_hamiltonian_uhat}) we have the Hamiltonian system
    \begin{equation*}
        \begin{dcases}
            \dot{Z}(s, \omega) = -\nabla_q H(\gamma_t^s[X](\omega), Z(s, \omega)) + \int_{\Omega} \nabla \Phi(\gamma(s, \omega) - \gamma(s, \tilde{\omega})) \, d\tilde{\omega} \\
            \dot{\gamma}_t^s[X](\omega) = \nabla_p H(\gamma_t^s[X](\omega), Z(s,\omega))
        \end{dcases}
    \end{equation*}
    
    Again, we denote
    \begin{equation*}
        \hat{\mathcal{F}}(X) = \frac{1}{2} \int_{\Omega^2} \Phi(X(\omega) - X(\tilde{\omega})) \, d\omega d\tilde{\omega}
    \end{equation*}
    
    And we note that
    \begin{equation*}
        \hat{U}(t, \bar{\gamma}(t)) \leq \int_0^t\int_{\Omega} L(\bar{\gamma}, \dot{\bar{\gamma}}) \, d\omega + \hat{\mathcal{F}}(\gamma) \, d\tau + \hat{U}(0, \bar{\gamma}(0)) \quad \forall \bar{\gamma}, t \in [0,T]
    \end{equation*}
    
    Rearranging,
    \begin{equation}
        \label{uhat_difference_estimation}
        \hat{U}(t, \bar{\gamma}(t)) - \int_0^t\int_{\Omega} L(\bar{\gamma}, \dot{\bar{\gamma}}) \, d\omega + \hat{\mathcal{F}}(\gamma) \, d\tau - \hat{U}(0, \bar{\gamma}(0)) \leq 0
    \end{equation}
    
    And see that if we plug in $\bar{\gamma} = \gamma$, we can define $f$ such that
    \begin{equation*}
        f(s) \overset{\text{set}}{=} \hat{U}(s, \gamma(s)) - \int_0^s\int_{\Omega} L(\gamma, \dot{\gamma}) \, d\omega + \hat{\mathcal{F}}(\gamma) \, d\tau - \hat{U}(0, \gamma(0)) = 0
    \end{equation*}
    
    Hence, $f'(s) = 0$ and thus
    \begin{equation}
        \label{derivative_f}
        f'(s) = \partial_s \hat{U}(s, \gamma(s)) + \langle \nabla_X \hat{U}(s,\gamma(s)), \dot{\gamma}(s) \rangle - \int_{\Omega} L(\gamma, \dot{\gamma}) \, d\omega - \hat{\mathcal{F}}(\gamma) = 0
    \end{equation}
    
    Combining (\ref{derivative_f}), Lemma \ref{uhat_hje_lemma_2}, and Claim \ref{uhat_claim_X_derivative} we conclude
    \begin{equation*}
        \partial_s \hat{U}(s, \gamma(s)) + \int_{\Omega} H(\gamma(s), \nabla_X \hat{U} (s, \gamma(s))) \, d\omega - \hat{\mathcal{F}}(\gamma(s)) = 0 \quad \forall s
    \end{equation*}
    
    Take $s = t$, and note that $\gamma(t) = \gamma_t^t[X] = X$ (by the conditions on the Euler--Lagrange Equation from earlier). This yields
    \begin{equation*}
        \partial_t \hat{U}(t, X) + \int_{\Omega} H(X, \nabla_X \hat{U}(t, X)) \, d\omega - \hat{\mathcal{F}}(X) = 0
    \end{equation*}
    
    Or in other words
    \begin{equation*}
        \partial_t \hat{U}(t, X) + \hat{H}(X, \nabla_X \hat{U}(t, X)) = 0
    \end{equation*}
    
    Which concludes the proof.
\end{proof}
\section{The Existence and Uniqueness of Optimal Individual Cost}

\subsection{Existence and Uniqueness}
Next we shift our discussion to an individual player's cost. Consider the function
\begin{equation*}
    B_0^T(r) = \int_0^T L(r(s), \dot{r}(s)) + F(r(s), \gamma_T^s[X]) \, ds + G(r(0), \gamma_T^0[X])
\end{equation*}

Where we define, $\forall q \in \mathbb{R}^d, \forall X \in \mathbb{H}$,
\begin{align*}
    F(q, X) \coloneqq \int_{\Omega} \Phi(q - X(\tilde{\omega})) \, d\tilde{\omega} \\
    G(q, X) \coloneqq \int_{\Omega} \Psi(q - X(\tilde{\omega})) \, d\tilde{\omega} 
\end{align*}

See that $B_0^t$ effectively calculates an individual player's cost along some path $r$. We seek to minimize such a cost: our goal is to again prove the existence of a minimum for $B_0^T$.

\begin{theorem}
    \label{exist_littleuhat_thm}
    Given $B_0^T$ as defined above, we have that a minimizer exists: we have a minimum $\hat{u}$ where
    \begin{equation*}
        \hat{u}(T,q;X) = \inf_{r} \left\{ \int_0^T L(r(s), \dot{r}(s)) + F(r(s), \gamma_T^s[X]) \, ds + G(r(0), \gamma_T^0[X]) : r(T) = q \right\}
    \end{equation*}
\end{theorem}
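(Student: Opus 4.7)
The plan is to reprise the Direct Method argument from Theorem~\ref{exist_uhat_thm}, with the important simplification that $\gamma_T^s[X]$ is now fixed data (guaranteed by that theorem), so $B_0^T$ is a functional of a single curve $r:[0,T]\to\mathbb{R}^d$ rather than of an $\mathbb{H}$-valued path. In particular there is no $\Omega^2$ integration attached to the optimization variable $r$, which eliminates the symmetric cross-terms in $\omega,\tilde{\omega}$ that forced the $2\|\nabla^2\Phi\|_{L^\infty}$ and $2\|\nabla^2\Psi\|_{L^\infty}$ corrections in the earlier convexity computation; the initial smallness hypothesis on $T$ will therefore be comfortably more than what is required here.

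First I would fix a minimizing sequence $(r_n)$ with $r_n(T)=q$. Coercivity is immediate from $L\geq c_0|v|^2$ together with the lower bounds on $\Phi$ and $\Psi$, producing a uniform bound on $\int_0^T|\dot r_n|^2\,ds$. Since $r_n(T)=q$ is fixed, an elementary Poincar\'{e} estimate bounds $(r_n)$ uniformly in $H^1(0,T;\mathbb{R}^d)$, so a subsequence satisfies $r_{n_k}\rightharpoonup r_0$ with the boundary condition $r_0(T)=q$ preserved by trace continuity.

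Next I would prove weak lower semicontinuity of $B_0^T$ via convexity. For the interpolation $r^\varepsilon=(1-\varepsilon)r+\varepsilon\bar r$ with $r(T)=\bar r(T)=q$, I set $f(\varepsilon)=B_0^T(r^\varepsilon)$ and differentiate twice. Using the assumed bounds $\lambda_0|v_1-v_0|^2\le \frac{d^2}{d\varepsilon^2}L(\cdots)$, $\nabla^2\Phi\ge c_\Phi I_d$ and $\nabla^2\Psi\ge c_\Psi I_d$ (the latter two transferring directly to the Hessians of $q\mapsto F(q,\gamma_T^s[X])$ and $q\mapsto G(q,\gamma_T^0[X])$ after integration in $\tilde{\omega}$), one arrives at
\[
f''(\varepsilon)\ge \lambda_0\int_0^T|\dot{\bar r}-\dot r|^2\,ds + c_\Phi\int_0^T|\bar r-r|^2\,ds + c_\Psi|\bar r(0)-r(0)|^2.
\]
Splitting $c_\Phi=c_\Phi^+-c_\Phi^-$ and $c_\Psi=c_\Psi^+-c_\Psi^-$ and applying the Dirichlet-at-$T$ Poincar\'{e} bounds $\int_0^T|\bar r-r|^2\,ds\le \tfrac{T^2}{2}\int_0^T|\dot{\bar r}-\dot r|^2\,ds$ and $|\bar r(0)-r(0)|^2\le T\int_0^T|\dot{\bar r}-\dot r|^2\,ds$, the negative contributions absorb into the $\lambda_0$-term under the terminal-time hypothesis, yielding $f''(\varepsilon)\ge 0$. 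Hence $B_0^T$ is convex, thus weakly lower semicontinuous, and the standard chain $\inf B_0^T = \lim_k B_0^T(r_{n_k})\ge B_0^T(r_0)\ge \inf B_0^T$ identifies $r_0$ as a minimizer realising $\hat u(T,q;X)$.

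The only delicate step, and the one I would be most careful about, is the Poincar\'{e} bookkeeping in $f''$: making sure the constants line up so the small-$T$ assumption is applied in exactly the right form. Uniqueness, should it be required (in line with the section heading), then follows by the same Taylor-expansion argument as in Theorem~\ref{exist_uhat_thm}: two distinct minimizers would force $f''(\varepsilon^*)=0$ somewhere, but strict positivity of the absorbed constant compels $\dot r^1=\dot r^0$, and the shared terminal value $q$ then compels $r^1=r^0$.
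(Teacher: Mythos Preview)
Your proposal is correct and follows essentially the same approach as the paper: Direct Method with a minimizing sequence, coercivity from $L\ge c_0|v|^2$ plus the lower bounds on $\Phi,\Psi$, convexity via the second variation $f''(\varepsilon)$ with the same Poincar\'{e} absorption of the $c_\Phi^-$ and $c_\Psi^-$ terms, and uniqueness by the Taylor-remainder argument. Your remark that the absence of $\Omega^2$ cross-terms removes the $2\|\nabla^2\Phi\|_{L^\infty}$ and $2\|\nabla^2\Psi\|_{L^\infty}$ corrections is a valid sharpening the paper leaves implicit (it simply says ``for small enough $T$'').
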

\begin{proof}
    Again, we proceed via the Direct Method in the Calculus of Variations. Let $(r_n)$ be a minimizing sequence on $B_0^T$:
    \begin{equation*}
        B_0^T(r_n) \to \inf_{\bar{r}}\left\{ B_0^T(\bar{r}) \right\} \overset{\text{set}}{=} w
    \end{equation*}
    
    By definition, we have
    \begin{multline*}
        w + 1 \geq B_0^T (r_n) = \int_0^T L(r_n(s), \dot{r}_n(s)) \, ds + \int_0^T \int_{\Omega} \Phi(r_n(s) - \gamma_t^s[X](\omega)) \, d\tilde{\omega} ds \\
        + \int_{\Omega} \Phi(r_n(0) - \gamma_t^0[X](\omega)) \, d\tilde{\omega} \\
        \geq c_0\int_0^T |\dot{r}_n(s)|^2 \, ds - D_1 - D_2
    \end{multline*}
    
    Via our initial assumptions (i.e., bounds on $L, \Phi, \Psi$). Again, via the Poincaré Inequality, there exists some constant $D > 0$ such that
    \begin{equation*}
        c_0 \int_0^T |\dot{r}_n(s)|^2 \, ds \geq D \int_0^T |r_n(s)|^2 \, ds
    \end{equation*}
    
    Giving us
    \begin{align*}
        D\int_0^T |r_n(s)|^2 \, ds - D_1 - D_2 &\leq w + 1 \\
        \left\| r_n \right\|_{L^2}^2 = \int_0^T |r_n(s)|^2 \, ds  &\leq \frac{1}{D}(w + D_1 + D_2 + 1) \overset{\text{set}}{=} M^2
    \end{align*}
    
    Note that $M^2 > 0$ does not depend on $n$. Square-rooting both sides yields:
    \begin{equation*}
    \left\| r_n \right\|_{L^2} \leq M \quad \forall n
    \end{equation*}
    
    Thus, we obtain the boundedness of $(r_n)$. Recall that in $L^2$, every bounded sequence is weakly pre-compact, and conclude that our minimizing sequence is weakly pre-compact: that is, it admits a weakly convergent subsequence $(r_{n_k})$ to some $r_0$:
    
    \begin{equation*}
        \lim_{k \to \infty} \langle r_{n_k}, g \rangle = \langle r_0 , g \rangle \quad \forall g \in L^2
    \end{equation*}
    
    Next we aim to demonstrate the convexity of $B_0^T$. Consider some $r,\Bar{r} \in \text{Dom}(B_0^T)$ where $r(T) = \bar{r}(T)$. Consider interpolation $r^{\varepsilon} = (1 - \varepsilon)r + \varepsilon\bar{r}$ and set $g(\varepsilon) = B_0^T(r^{\varepsilon})$:
    \begin{multline*}
        g(\varepsilon) = \int_0^T L(r^{\varepsilon}(s), \dot{r}^{\varepsilon}(s)) \, ds + \int_0^T \int_{\Omega} \Phi(r^{\varepsilon}(s) - \gamma_t^s[X](\omega)) \, d\tilde{\omega} ds \\
        + \int_{\Omega} \Psi(r^{\varepsilon}(0) - \gamma_t^0[X](\omega)) \, d\tilde{\omega} 
    \end{multline*}
    
    If we differentiate twice,
    \begin{multline*}
        g''(\varepsilon) = \int_0^T \left\langle \nabla^2 L(r^{\varepsilon}, \dot{r}^{\varepsilon}) \begin{pmatrix} \bar{r} - r \\ \dot{\bar{r}} - \dot{r} \end{pmatrix}, \begin{pmatrix} \bar{r} - r \\ \dot{\bar{r}} - \dot{r} \end{pmatrix} \right\rangle \, ds \\
        + \int_0^T\int_{\Omega} \left\langle \nabla^2 \Phi(r^{\varepsilon}(s) - \gamma_T^s[X](\tilde{\omega}))(\bar{r} - r), \bar{r} - r \right\rangle \, d\tilde{\omega} ds\\
        + \int_{\Omega} \left\langle \nabla^2 \Psi(r^{\varepsilon}(0) - \gamma_T^0[X](\tilde{\omega}))(\bar{r}(0) - r(0)), \bar{r}(0) - r(0) \right\rangle \, d\tilde{\omega}
    \end{multline*}
    
    And by our initial assumptions we have
    \begin{align*}
        g''(\varepsilon) &\geq \int_0^T \lambda_0 |\dot{\bar{r}} - \dot{r}|^2 \, ds + \int_0^T\int_{\Omega} c_{\Phi}|\bar{r} - r|^2 \, d\tilde{\omega} ds + \int_{\Omega} c_{\Psi} |\bar{r}(0) - r(0)|^2 \, d\tilde{\omega} \\
        &= \int_0^T \lambda_0 |\dot{\bar{r}} - \dot{r}|^2 \, ds + \int_0^T c_{\Phi}|\bar{r} - r|^2 \, ds + c_{\Psi}|\bar{r}(0) - r(0)|^2 \\
        &= \int_0^T \lambda_0 |\dot{\bar{r}} - \dot{r}|^2 \, ds + \int_0^T (c^{+}_{\Phi} - c^{-}_{\Phi})|\bar{r} - r|^2 \, ds + (c^{+}_{\Psi} - c^{-}_{\Psi})|\bar{r}(0) - r(0)|^2
    \end{align*}
    
    By the Poincaré Inequality,
    \begin{align*}
        \frac{\lambda_0}{2} \int_0^T |\dot{\bar{r}} - \dot{r}|^2 \, ds &\geq \frac{\lambda_0}{2} \cdot \frac{2}{T^2} \int_0^T |\bar{r} - r|^2 \, ds \\
        &= \frac{\lambda_0}{T^2} \int_0^T |\bar{r} - r|^2 \, ds
    \end{align*}
    
    And also,
    \begin{align*}
        \frac{\lambda_0}{2} \int_0^T |\dot{\bar{r}} - \dot{r}|^2 \, ds &\geq \frac{\lambda_0}{2} \cdot \frac{1}{T} |\bar{r}(0) - r(0)|^2 \\
        &= \frac{\lambda_0}{2T} |\bar{r}(0) - r(0)|^2
    \end{align*}
    
    Yielding
    \begin{align*}
        g''(\varepsilon) &\geq \int_0^T \lambda_0 |\dot{\bar{r}} - \dot{r}|^2 \, ds + \int_0^T (c^{+}_{\Phi} - c^{-}_{\Phi})|\bar{r} - r|^2 \, ds + (c^{+}_{\Psi} - c^{-}_{\Psi})|\bar{r}(0) - r(0)|^2 \\
        &= 2\left( \frac{\lambda_0}{2}\int_0^T |\dot{\bar{r}} - \dot{r}|^2 \, ds \right) + \int_0^T (c^{+}_{\Phi} - c^{-}_{\Phi})|\bar{r} - r|^2 \, ds + (c^{+}_{\Psi} - c^{-}_{\Psi})|\bar{r}(0) - r(0)|^2 \\
        &\geq \int_0^T \left(\frac{\lambda_0}{T^2} + c^{+}_{\Phi} - c^{-}_{\Phi}\right)|\bar{r} - r|^2 \, ds + \left(\frac{\lambda_0}{2T} + c^{+}_{\Psi} - c^{-}_{\Psi}\right)|\bar{r}(0) - r(0)|^2
    \end{align*}
    
    Which is certainly nonnegative for small enough $T$. Hence $g'' \geq 0 \iff g \text{ convex} \implies B_0^T \text{ convex} \implies B_0^T \text{ lower semi-continuous}$. And thus
    \begin{equation*}
        \inf_{\bar{r}} \left\{ B_0^{T}(\bar{r}) \right\} = \lim_{n \to \infty} B_0^T (r_n) = \lim_{n \to \infty} B_0^T (r_{n_k}) \geq B_0^T (r_0) \geq \inf_{\bar{r}} \left\{ B_0^{T}(\bar{r}) \right\}
    \end{equation*}
    
    From which we conclude that a minimum exists for some minimizer $r_0$:
    \begin{equation*}
        \hat{u}(T,q;X) = \inf_{\bar{r}} \left\{ B_0^{T}(\bar{r}) : \bar{r}(T) = q \right\}
    \end{equation*}
    
    We now show that such a minimizer is unique. Assume we have two distinct minimizers of $B_0^T$ $r^0 \neq r^1$. Like before, we aim to show that they are equal. We consider a convex combination of these minimizers $r^\varepsilon = (1 - \varepsilon)r^0 - \varepsilon r^1$. Taylor's Theorem on $B_0^T$ yields
    \begin{equation*}
        B_0^T(r^1) - B_0^T(r^0) = \left. \frac{d}{d\varepsilon} B_0^T(r^\varepsilon) \right|_{\varepsilon = 0} + \frac{1}{2} \left. \frac{d^2}{d\varepsilon^2} B_0^T(r^{\varepsilon}) \right|_{\varepsilon = \varepsilon^*}
    \end{equation*}
    
    $r^1$ and $r^0$ both being minimizers implies that $B_0^T (r^1) - B_0^T(r^0) = 0$ and $\left. \frac{d}{d\varepsilon} B_0^T(r^\varepsilon) \right|_{\varepsilon = 0} = 0$. Thus we conclude that $\left. \frac{d^2}{d\varepsilon^2} B_0^T(r^{\varepsilon}) \right|_{\varepsilon = \varepsilon^*} = 0$. From above we have that
    \begin{equation*}
        g''(\varepsilon) \geq \left(\lambda_0 - \frac{T^2}{2}c_{\Phi}^{-} - Tc_{\Psi}^{-} \right) \int_0^T |\dot{r}^1 - \dot{r}^0|^2 \, dt \geq 0
    \end{equation*}
    
    As the constant multiple is again strictly positive, we conclude that if $g''(\varepsilon^*) = 0$ then
    \begin{equation*}
        \int_0^T |\dot{r}^1 - \dot{r}^0|^2 \, dt = 0 \implies \dot{r}^1 = \dot{r}^0
    \end{equation*}
    
    As the terminal position for all minimizers is $q$ ($r(T) = q$), we conclude that as $r^1$ and $r^0$ have equal derivatives and agree at a point, they must be equal everywhere: $r^1 = r^0$, and hence the minimizer is unique. Furthermore, it follows that if $q = X(\omega)$, as $s \mapsto \gamma^s_t[X](\omega)$ is a solution to the Euler--Lagrange Equation for the minimizer of $\hat{u}$, that it is also the unique minimizer of $\hat{u}$. 
\end{proof}

\subsection{Euler--Lagrange Equation of Minimizer}

\begin{proposition}
    \label{euler_lagrange_thm_u}
    $r(s) = {\gamma}_t^s[X](\omega)$ is a minimizer achieving $\hat{u}(t,q,X)$ if and only if 
    \begin{equation*}
        \begin{dcases}
            \nabla_q L(r(s), \dot{r}(s)) - \frac{d}{ds} \nabla_v L(r(s), \dot{r}(s)) + \int_{\Omega} \nabla\Phi (r(s) - \gamma_t^s[X](\tilde{\omega})) \, d\tilde{\omega} = 0 \\
            r(t) = X(\omega) \\
            \int_{\Omega} \nabla \Psi(r(0) - \gamma_t^0[X](\tilde{\omega})) \, d\tilde{\omega} = \nabla_v L(r(0), \dot{r}(0))
        \end{dcases}
    \end{equation*}
\end{proposition}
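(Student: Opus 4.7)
The plan is a standard first-variation argument, closely mirroring the proof of Proposition \ref{euler_lagrange_thm_uhat} but adapted to the individual-cost functional $B_0^t$. The crucial structural difference is that competitor paths $r$ are only constrained by the terminal condition $r(t) = q$, while $r(0)$ is free; consequently admissible variations $g:[0,t]\to\mathbb{R}^d$ need only satisfy $g(t) = 0$, and the freedom in $g(0)$ is what produces the natural boundary condition at $s = 0$ involving $\nabla\Psi$.

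First I would set $a(\varepsilon) = B_0^t(r + \varepsilon g)$ and invoke $a'(0) = 0$. Differentiating under the integral (justified by the $C^3$ regularity of $L$, $\Phi$, $\Psi$) and then integrating by parts on the $\langle \nabla_v L(r,\dot r), \dot g\rangle$ term, using $g(t) = 0$, yields
\[
0 = \int_0^t \left\langle \nabla_q L(r,\dot r) - \tfrac{d}{ds}\nabla_v L(r,\dot r) + \int_\Omega \nabla\Phi(r(s) - \gamma_t^s[X](\tilde\omega))\, d\tilde\omega,\ g(s)\right\rangle ds + \left\langle \int_\Omega \nabla\Psi(r(0) - \gamma_t^0[X](\tilde\omega))\, d\tilde\omega - \nabla_v L(r(0),\dot r(0)),\ g(0)\right\rangle.
\]
I would then extract the two conditions in two stages: first, restricting to variations with $g(0) = 0$, the fundamental lemma of the calculus of variations in $L^2(0,t;\mathbb{R}^d)$ gives the interior Euler--Lagrange ODE; second, with that ODE established the interior integral vanishes for every admissible $g$, so choosing variations such as $g(s) = (1 - s/t)\xi$ with $\xi \in \mathbb{R}^d$ arbitrary isolates the endpoint term and forces the free-boundary identity at $s = 0$. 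The converse direction follows by reversing these computations to obtain $a'(0) = 0$ along every admissible variation; since Theorem \ref{exist_littleuhat_thm} shows $B_0^t$ is convex under the smallness hypothesis on $T$, every critical point is automatically a global minimizer, so the three conditions together suffice.

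I do not anticipate a serious obstacle, as the argument is largely bookkeeping. The only genuine subtlety is the two-step extraction of the interior ODE and the natural boundary condition from a single variational identity, by first restricting the class of test variations and then re-enlarging it. The identification of $r$ with the slice $\gamma_t^s[X](\omega)$ in the case $q = X(\omega)$, suggested by the statement's naming, is then automatic: that slice satisfies the same Euler--Lagrange system coming from Proposition \ref{euler_lagrange_thm_uhat} (after invoking Lemma \ref{derivative_hatL_derivative_L} to identify $\nabla_V \hat L$ with $\nabla_v L$), and uniqueness of the minimizer of $\hat u$ from Theorem \ref{exist_littleuhat_thm} forces equality.
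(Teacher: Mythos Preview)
Your proposal is correct and in fact cleaner than the paper's own argument, though the underlying idea is the same first-variation computation.

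The principal difference lies in how the natural boundary condition at $s=0$ is obtained. You work with variations $g$ satisfying only $g(t)=0$, so that integration by parts leaves the endpoint term $\langle \nabla_v L(r(0),\dot r(0)) - \int_\Omega \nabla\Psi(\cdots)\,d\tilde\omega,\ g(0)\rangle$; the two-stage extraction (first $g(0)=0$, then $g(0)$ arbitrary) then yields both the interior Euler--Lagrange equation and the transversality condition in a single self-contained argument. The paper instead restricts to variations with $y(0)=y(t)=0$, which kills both boundary terms and yields only the interior equation; it then obtains the condition at $s=0$ by a forward reference to Claim~\ref{u_hje_claim_1} (itself proved later, using the interior equation just derived) combined with the identity $u(0,\cdot)=G(\cdot,\gamma_t^0[X])$. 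Your route avoids this forward reference and the implicit dependence on knowing $u(0,q)$ explicitly.

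You also address the converse direction, which the paper's proof does not, by invoking the convexity of $B_0^t$ from Theorem~\ref{exist_littleuhat_thm} so that any critical point is a global minimizer. This is the right way to close the ``if and only if'' and is a genuine improvement over the paper's presentation. Your final remark on identifying the minimizer with the slice $\gamma_t^s[X](\omega)$ via Proposition~\ref{euler_lagrange_thm_uhat} and the uniqueness in Theorem~\ref{exist_littleuhat_thm} is likewise correct and matches how the paper uses this identification elsewhere.
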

\begin{proof}
    Let $\omega$ be such that $q = X(\omega) \; (= \gamma_t^t [X](\omega))$, in which $\gamma_t^s[X](\omega)$ is the unique minimizer achieving $\hat{u}$. Consider 
    \begin{equation*}
        I(\bar{r}) = \int_0^t L(\bar{r}, \dot{\bar{r}}) + F(\bar{r}, \gamma_t^s[X]) \, ds + G(\bar{r}(0), \gamma_t^0 [X])
    \end{equation*}

    Suppose $\bar{r}(t) = q = X(\omega)$. Set $r(s) = \gamma_t^s[X](\omega)$ to be the minimizer of $I$ ($I(r) = \hat{u}(t,q,X)$). Consider a variation on $r$: let $y$ be such that $y(0) = y(t) = 0$. Let $\varepsilon \in \mathbb{R}$. Then
    \begin{align*}
        (r + \varepsilon y)(0) &= r(0) \\
        (r + \varepsilon y)(t) &= r(t)
    \end{align*}

    Let $b(\varepsilon) = I(r + \varepsilon y)$. Then
    \begin{align*}
        b(\varepsilon) &= \int_0^t L(r + \varepsilon y, \dot{r} + \varepsilon \dot{y}) + F(r + \varepsilon y, \gamma_t^s[X]) \, ds + G(\bar{r}(0), \gamma_t^0 [X]) \\
        b'(\varepsilon) &= \int_0^t \langle \nabla_q L(r + \varepsilon y, \dot{r} + \varepsilon \dot{y}), y \rangle + \langle \nabla_v L(r + \varepsilon y, \dot{r} + \varepsilon \dot{y}), \dot{y} \rangle + \langle \nabla F(r + \varepsilon y, \gamma_t^s [X]), y \rangle \, ds 
    \end{align*}

    As $r$ minimizes $I$, we note that $b$ attains its minimum at $\varepsilon = 0$, and thus $b'(0) = 0$. In other words
    \begin{equation*}
        0 = b'(0) = \int_0^t \langle \nabla_q L(r, \dot{r}), y \rangle + \langle \nabla_v L(r, \dot{r}), \dot{y} \rangle + \langle \nabla F(r, \gamma_t^s [X]), y \rangle \, ds
    \end{equation*}

    Via Integration by Parts,
    \begin{equation*}
        \int_0^t \langle \nabla_v L(r, \dot{r}), \dot{y} \rangle \, ds = \biggl. \langle \nabla_v L(r, \dot{r}), y \rangle \biggr]_0^t - \int_0^t \left\langle \frac{d}{ds} \nabla_v L(r, \dot{r}), y \right\rangle \, ds
    \end{equation*}

    In which $\bigl. \langle \nabla_v L(r, \dot{r}), y \rangle \bigr]_0^t = \langle \nabla_v L (r(t), \dot{r}(t) , 0 \rangle - \langle \nabla_v L (r(0), \dot{r}(0) , 0 \rangle = 0$. Therefore
    \begin{equation*}
        \int_0^t \langle \nabla_v L(r, \dot{r}), \dot{y} \rangle \, ds = \int_0^t \left\langle -\frac{d}{ds} \nabla_v L(r, \dot{r}), y \right\rangle \, ds
    \end{equation*}

    Hence,
    \begin{align*}
        0 &= \int_0^t \langle \nabla_q L(r, \dot{r}), y \rangle + \left\langle -\frac{d}{ds} \nabla_v L(r, \dot{r}), y \right\rangle + \langle \nabla F(r, \gamma_t^s [X]), y \rangle \, ds \\
        &= \int_0^t \left\langle \nabla_q L(r, \dot{r}) - \frac{d}{ds} \nabla_v L(r, \dot{r}) + \nabla F(r, \gamma_t^s [X]), y \right\rangle \, ds
    \end{align*}

    As the above is true for all $y$ on $[0,t]$, we conclude that the first argument in the inner product must be 0. We conclude
    \begin{equation*}
        \frac{d}{ds} \nabla_v L(r, \dot{r}) = \nabla_q L(r, \dot{r}) + \nabla F(r, \gamma_t^s[X]) \int_{\Omega} \nabla \Phi(r(s) - \gamma_t^s[X](\tilde{\omega})) \, d\tilde{\omega}
    \end{equation*}

    Where again, $r(s) = \gamma_t^s[X](\omega)$. We show that
    \begin{equation*}
        \int_{\Omega} \nabla \Psi(r(0) - \gamma_t^0[X](\tilde{\omega})) \, d\tilde{\omega} = \nabla_v L(r(0), \dot{r}(0))
    \end{equation*}

    By plugging in $t = 0$ in Claim \ref{u_hje_claim_1} and thus obtain the desired result.
\end{proof}
\section{Differentiability of \texorpdfstring{$u$}{u} and its Hamilton--Jacobi Equation}

First, define $u$ as
\begin{equation*}
    u(t,q) = \hat{u}(t,q, \gamma_T^t[X]) \quad \forall t \in [0,T]
\end{equation*}

Hence, as $u$ is defined by $\hat{u}$, everything we demonstrate for $\hat{u}$ also follows for $u$. We aim to show that $u$ is continuously differentiable (existence of $\partial_t u, \nabla_q u \in C$) and that it satisfies the Hamilton--Jacobi Equation
\begin{equation*}
    \partial_t u(t,q) + H(q, \nabla_q u(t,q)) - \int_{\Omega} \Phi(q - \gamma_T^t[X](\tilde{\omega})) \, d\tilde{\omega} = 0
\end{equation*}

With the boundary condition
\begin{equation*}
    u(0,q) = G(q, \gamma_T^0[X])
\end{equation*}

\subsection{Existence \& Continuity of \texorpdfstring{$\partial_t u$}{Partial-t u}, \texorpdfstring{$\nabla_q u$}{Nabla-q u}}

Just like before, we use a second difference quotient argument to establish the existence and continuity of the space derivative of $u$ ($\nabla_q u$), and then use this to imply the existence and continuity of the time derivative of $u$ ($\partial_t u$).

\begin{theorem}
    \label{first_derivative_q_u}
    $\nabla_q u(t,q)$ exists and is continuous.
\end{theorem}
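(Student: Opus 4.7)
The plan is to mirror the two-sided second-difference argument used for $\nabla_X \hat{U}$ in Theorem \ref{first_derivative_x_uhat}, now applied to the individual-cost functional $B_0^t$ and its value function $u(t,q) = \hat{u}(t, q; \gamma_T^t[X])$. First I would fix $(t,q)$, let $r(\cdot) = \gamma_t^\cdot[X](\omega)$ (with $\omega$ chosen so that $q = X(\omega)$) be the unique minimizer of $\hat{u}(t,q;\gamma_T^t[X])$ guaranteed by Theorem \ref{exist_littleuhat_thm}, and insert the straight-line variation $\tilde r(\tau) = r(\tau) + \tfrac{\tau}{t} h$. Since $\tilde r(0) = r(0)$ and $\tilde r(t) = q \pm h$, this gives the competitor bounds
\begin{equation*}
    u(t,q+h) + u(t,q-h) - 2 u(t,q) \leq \int_0^t \bigl( C_L \tfrac{\tau^2 + 1}{t^2} + C_\Phi \tfrac{\tau^2}{t^2} \bigr)|h|^2 \, d\tau = \Bigl(\tfrac{t}{3}(C_L + C_\Phi) + \tfrac{1}{t} C_L\Bigr)|h|^2,
\end{equation*}
where $C_L$, $C_\Phi$ are upper bounds on $\nabla^2 L$ and $\nabla^2 \Phi$ coming from the initial assumptions. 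This yields the distributional upper bound $\nabla_{qq} u(t,q) \leq C_t$.

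For the lower bound I would take $r^{\pm}$ to be the (unique) minimizers of $\hat u(t, q \pm h; \gamma_T^t[X])$ and test $u(t,q)$ against the midpoint path $\tfrac{1}{2}(r^+ + r^-)$, which still terminates at $q$. The convexity of $B_0^t$ established inside the proof of Theorem \ref{exist_littleuhat_thm} immediately gives
\begin{equation*}
    u(t,q+h) + u(t,q-h) - 2u(t,q) \geq 0,
\end{equation*}
so $\nabla_{qq} u(t,q) \geq 0$ distributionally. Combining the two bounds and invoking the general fact recorded before Theorem \ref{first_derivative_x_uhat} (that distributional Hessian bounds $-C \leq \nabla^2 f \leq C$ imply classical differentiability with $C$-Lipschitz gradient) delivers the existence of $\nabla_q u(t,\cdot)$ together with its $C_t$-Lipschitz continuity in the spatial variable.

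It then remains to upgrade the spatial regularity to joint continuity in $(t,q)$, and this is where I expect the main obstacle to lie, because $u$ depends on $t$ not only directly but also through the path $\gamma_T^t[X]$. I would proceed by establishing the individual analog of Claim \ref{uhat_claim_X_derivative}, namely
\begin{equation*}
    \nabla_q u(t, r(t)) = \nabla_v L(r(t), \dot r(t)),
\end{equation*}
via the same variation $\tilde r(\tau) = r(\tau) + \tfrac{\tau}{t} h$, integration by parts against the Euler--Lagrange equation of Proposition \ref{euler_lagrange_thm_u}, and the identification $\nabla_V \hat L = \nabla_v L$ from Lemma \ref{derivative_hatL_derivative_L}. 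Then, given $t_n \to t$, I would apply the continuity of minimizers from Lemma \ref{cont_of_seq_of_gamma} (whose proof transfers to $B_0^{t_n}$ since both the measure $\gamma_T^{\cdot}[X]$ and the data are continuous in $t_n$) to conclude $r_n(t_n) \to r(t)$ and $\dot r_n(t_n) \to \dot r(t)$, whence
\begin{equation*}
    \nabla_q u(t_n, q) = \nabla_v L(r_n(t_n), \dot r_n(t_n)) \longrightarrow \nabla_v L(r(t), \dot r(t)) = \nabla_q u(t,q).
\end{equation*}
The delicate point is the compactness/uniqueness of $r_n$ in the presence of the $t$-varying measure $\gamma_T^{t_n}[X]$, but the uniform bounds from the coercivity of $L$, together with the strict convexity that guarantees uniqueness of each $r_n$, should close the argument exactly as in Lemma \ref{cont_of_seq_of_gamma}, yielding the desired continuity of $\nabla_q u$ in both variables.
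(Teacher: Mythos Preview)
Your proposal is correct and follows essentially the same route as the paper: the straight-line variation $\tilde r(\tau)=r(\tau)+\tfrac{\tau}{t}h$ for the upper Hessian bound, the midpoint path $\tfrac12(r^++r^-)$ together with the convexity of $B_0^t$ for the lower bound, then the identity $\nabla_q u(t,r(t))=\nabla_v L(r(t),\dot r(t))$ combined with Lemma~\ref{cont_of_seq_of_gamma} for continuity in $t$. The only superfluous step is your appeal to Lemma~\ref{derivative_hatL_derivative_L}: since $B_0^t$ is written directly in terms of $L$ (not $\hat L$), the integration by parts against Proposition~\ref{euler_lagrange_thm_u} already yields $\nabla_v L$ without any identification; and for time continuity the paper short-circuits your adaptation of Lemma~\ref{cont_of_seq_of_gamma} by simply noting that the minimizer of $\hat u$ at $q=X(\omega)$ \emph{is} $\gamma_t^{\,\cdot}[X](\omega)$, so the lemma applies verbatim.
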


\begin{proof}
    Recall
    \begin{equation*}
        \hat{u}(t,q,X) = \inf_{\bar{r}} \left\{ \int_0^t L(\bar{r}, \dot{\bar{r}}) + F(\bar{r}, \gamma_t^s[X]) \, ds + G(\bar{r}(0), \gamma_t^0[X]) : \bar{r}(t) = q \right\}
    \end{equation*}
    
    Let $r$ be optimal in $\hat{u}$ ($\bar{r} = r(s) = \gamma_t^s[X](\omega)$ for $q = X(\omega)$). That is,
    \begin{equation*}
        \hat{u}(t,q,X) = \int_0^t L(r, \dot{r}) + F(r, \gamma_t^s[X]) \, ds + G(r(0), \gamma_t^0[X]) \quad (r(t) = q)
    \end{equation*} 
    
    We introduce a variation on $r$: let $\tilde{r}(\tau) = r(\tau) + \frac{\tau}{t}h$, hence $\tilde{r}(0) = r(0)$ and $\tilde{r}(t) = r(t) + h$. Therefore,
    \begin{align*}
        \hat{u}(t, q + h, X) &\leq \int_0^t L\left(r + \frac{\tau}{t}h, \dot{r} + \frac{h}{t} \right) + F\left(r + \frac{\tau}{t}h, \gamma_t^\tau[X]\right) \, d\tau + G(r(0), \gamma_t^0[X]) \\
        \hat{u}(t, q - h, X) &\leq \int_0^t L\left(r - \frac{\tau}{t}h, \dot{r} - \frac{h}{t} \right) + F\left(r - \frac{\tau}{t}h, \gamma_t^\tau[X]\right) \, d\tau + G(r(0), \gamma_t^0[X])
    \end{align*}
    
    Let $C_L$, $C_F$ be such that $\nabla^2 L \leq C_L I$, $\nabla^2 F \leq C_F I$. Like before, we have
    \begin{align*}
        \hat{u}(t, q + h, X) + \hat{u}(t, q - h, X) - 2\hat{u}(t, q, X) &\leq \int_0^t C_L \left\langle \begin{pmatrix} \frac{\tau h}{t} \\ \frac{h}{t} \end{pmatrix}, \begin{pmatrix} \frac{\tau h}{t} \\ \frac{h}{t} \end{pmatrix} \right\rangle + C_F \left\langle \frac{\tau h}{t}, \frac{\tau h}{t} \right\rangle \, d\tau \\
        &= \int_0^t C_L \left(\frac{\tau^2 |h|^2}{t^2} + \frac{|h|^2}{t^2} \right) + C_F \left( \frac{\tau^2 |h|^2}{t^2} \right) \, d\tau \\
        &= \left. (C_L + C_F)\frac{\tau^3}{3t^2}|h|^2 + C_L \frac{\tau}{t^2}|h|^2 \right]_{\tau = 0}^{\tau = t} \\
        &= \left(\frac{t}{3}(C_L + C_F) + \frac{1}{t}C_L\right)|h|^2
    \end{align*}
    
    Which implies that
    \begin{equation}
        \label{hard_case_u}
        \nabla_{qq} \hat{u}(t, q, X) \leq \left(\frac{t}{3}(C_L + C_F) + \frac{1}{t}C_L\right) \overset{\text{set}}{=} c_t
    \end{equation}
    
    For a lower bound on $\nabla_{qq} \hat{u}$, we simply consider optimal paths on its variations: let $r^+$, $r^-$ denote the optimal paths for $\hat{u}(t, q + h, X)$ and $\hat{u}(t, q - h, X)$, respectively. Thus,
    \begin{align*}
        \hat{u}(t, q + h, X) &= \int_0^t L(r^+, \dot{r}^+) + F(r^+, \gamma_t^\tau[X]) \, d\tau + G(r^+(0), \gamma_t^0[X]) \\
        \hat{u}(t, q - h, X) &= \int_0^t L(r^-, \dot{r}^-) + F(r^-, \gamma_t^\tau[X]) \, d\tau + G(r^-(0), \gamma_t^0[X])
    \end{align*}
    
    Consider the path obtained by taking the average of these two optimal paths, $\frac{1}{2}(r^+ + r^-)$. Note that it has terminal position $q$ at time $t$, and hence
    \begin{equation*}
        \hat{u}(t, q, X) \leq \int_0^t L \left( \frac{r^+ + r^-}{2}, \frac{\dot{r}^+ + \dot{r}^-}{2} \right) + F\left( \frac{r^+ + r^-}{2}, \gamma_t^\tau[X] \right) \, d\tau + G\left(\frac{r^+(0) + r^-(0)}{2}, \gamma_t^0[X] \right)
    \end{equation*}
    
    It follows from the convexity of $B_0^t$ that
    \begin{multline*}
        \hat{u}(t, q + h, X) + \hat{u}(t, q - h, X) \\
        = \int_0^t L(r^+, \dot{r}^+) + L(r^-, \dot{r}^-) + F(r^+, \gamma_t^\tau[X]) + F(r^-, \gamma_t^\tau[X]) \, d\tau + G(r^+(0), \gamma_t^0[X]) + G(r^-(0), \gamma_t^0[X]) \\
        \geq 2\int_0^t L \left( \frac{r^+ + r^-}{2}, \frac{\dot{r}^+ + \dot{r}^-}{2} \right) + F\left( \frac{r^+ + r^-}{2}, \gamma_t^\tau[X] \right) \, d\tau + 2G\left(\frac{r^+(0) + r^-(0)}{2}, \gamma_t^0[X] \right) \\
        \geq 2\hat{u}(t, q, X)
    \end{multline*}
    
    Therefore we obtain
    \begin{equation*}
        \hat{u}(t, q + h, X) + \hat{u}(t, q - h, X) - 2\hat{u}(t, q, X) \geq 0
    \end{equation*}
    
    Hence
    \begin{equation}
        \label{lower_bound_u_2nd_derivative}
        \nabla_{qq} \hat{u}(t,q,X) \geq 0
    \end{equation}
    
    Combining (\ref{hard_case_u}) and (\ref{lower_bound_u_2nd_derivative}) results in
    \begin{equation}
        \label{hessian_u_bound}
        0 \leq \nabla_{qq} \hat{u}(t, q, X) \leq c_t
    \end{equation}
    
    By (\ref{hessian_u_bound}), we have that $\nabla_{qq} \hat{u}$ exists and that $\nabla_{qq} \hat{u}(t, \cdot, X)$ is $c_t$--Lipschitz. From this we may conclude that $\nabla_{qq} u$ exists and that $\nabla_{qq} u(t, \cdot)$ is $c_t$--Lipschitz (hence, continuous). We prove the following claim:
    
    \begin{claim}
        \label{u_hje_claim_1}
        We claim that
        \begin{equation*}
            \nabla_q u(t, r(t)) = \nabla_v L(r(t), \dot{r}(t))
        \end{equation*}
    \end{claim}
    \begin{proof}[Proof of Claim \ref{u_hje_claim_1}.]
        Let us introduce variation on $r$, $\tilde{r}$ where
        \begin{equation*}
            \tilde{r}(\tau) = r(\tau) + \frac{\tau}{t}h \quad \left(\alpha(\tau) \overset{\text{set}}{=} \frac{\tau}{t}\right)
        \end{equation*}
        
        Note that $\tilde{r}(0) = r(0)$, $\tilde{r}(t) = r(t) + h$. Next, we define $A$:
        \begin{equation*}
            A(h) = u(t, r(t) + h) - \int_0^t L(r + \alpha h, \dot{r} + \dot{\alpha} h) + F(r + \alpha h, \gamma_T^{\tau}[X]) \, d\tau - G(0, \gamma_T^0[X])
        \end{equation*}
        
        See that $A(h) \leq 0 \;\, \forall h$. Particularly, $A(\mathbf{0}) = 0 \implies h = \mathbf{0}$ is a maximizer of $A$. Therefore, $\nabla A(\mathbf{0}) = 0$, which implies
        \begin{equation*}
            0 = \nabla_q u(t, r(t)) - \int_0^t \langle \nabla_q L(r, \dot{r}), \alpha \rangle + \langle \nabla_v L(r, \dot{r}), \dot{\alpha} \rangle + \langle \nabla F(r, \gamma_T^{\tau}[X]), \alpha \rangle \, d\tau
        \end{equation*}
        
        Now note that via Integration by Parts,
        \begin{align*}
            \int_0^t \langle \nabla_v L(r, \dot{r}), \dot{\alpha} \rangle \, d\tau =& \biggl.\langle \nabla_v L(r,\dot{r}), \alpha \rangle \biggr]_0^t - \int_0^t \left\langle \frac{d}{d\tau} \nabla_v L(r,\dot{r}), \alpha \right\rangle \, d\tau \\
            &= \nabla_v L(r(t), \dot{r}(t)) + \int_0^t \left\langle -\frac{d}{d\tau} \nabla_v L(r, \dot{r}), \alpha \right\rangle \, d\tau
        \end{align*}
        
        Hence
        \begin{equation*}
            0 = \nabla_q u(t, r(t)) - \nabla_v L(r(t), \dot{r}(t)) + \int_0^t \left\langle \nabla_q L(r, \dot{r}) - \frac{d}{d\tau} \nabla_v L(r,\dot{r}) + \nabla F(r, \gamma_T^{\tau}[X]), \alpha \right\rangle \, d\tau
        \end{equation*}
        
        By (\ref{euler_lagrange_eq_u}) (as $r(s) = \gamma_t^s[X]$),
        \begin{equation*}
            \int_0^t \left\langle \nabla_q L(r, \dot{r}) - \frac{d}{d\tau} \nabla_v L(r,\dot{r}) + \nabla F(r, \gamma_T^{\tau}[X]), \alpha \right\rangle \, d\tau = 0
        \end{equation*}
        
        The result follows.
    \end{proof}
    
    Thus, (as every minimizer of $u$ is also a minimizer of $\hat{U}$) by Lemma \ref{cont_of_seq_of_gamma} and Claim \ref{u_hje_claim_1} we have that as $r(t) = \gamma^t_t[X](\omega)$, if $t_n \to t$,
    \begin{align*}
        \lim_{n \to \infty} \nabla_q u(t_n, q) &= \lim_{n \to \infty} \nabla_v L(r(t_n), \dot{r}(t_n)) \\
        &= \nabla_v L(r(t), \dot{r}(t)) \\
        &= \nabla_q u(t,q)
    \end{align*}
    
    This implies that $\nabla_q u$ is also continuous in the time input, which concludes the proof.
\end{proof}

We use the existence and continuity of $\nabla_q u$ to establish the existence and continuity of $\partial_t u$.

\begin{theorem}
    \label{first_derivative_t_u}
    Given the existence, continuity of $\nabla_q u(t,q)$, then $\partial_t u(t,q)$ exists and is continuous.
\end{theorem}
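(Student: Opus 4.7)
The plan is to closely mirror the argument of Theorem \ref{first_derivative_t_uhat}. Fix $(t,q)$ and let $r$ denote the optimal path achieving $u(t,q) = \hat u(t, q, \gamma_T^t[X])$, guaranteed by Theorem \ref{exist_littleuhat_thm}, so that $r(t) = q$. The Markov-type identity $\gamma_t^s[\gamma_T^t[X]] = \gamma_T^s[X]$ for $s \in [0,t]$ (which follows from the uniqueness part of Theorem \ref{exist_uhat_thm} together with the Bellman principle: the restriction of a global optimizer is optimal for the sub-problem) lets us write
\begin{equation*}
    u(t, r(t)) = \int_0^t \bigl[L(r, \dot r) + F(r, \gamma_T^s[X])\bigr] \, ds + G(r(0), \gamma_T^0[X]).
\end{equation*}
Applying the Bellman principle to $r$ itself shows that $r$ restricted to $[0, t-h]$ is optimal for $u(t-h, r(t-h))$, and since the boundary term $G(r(0), \gamma_T^0[X])$ is unchanged, subtracting yields the key identity
\begin{equation*}
    u(t, r(t)) - u(t-h, r(t-h)) = \int_{t-h}^t \bigl[L(r(s), \dot r(s)) + F(r(s), \gamma_T^s[X])\bigr] \, ds.
\end{equation*}

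Next I add and subtract $u(t-h, r(t))$, divide by $-h$, and rearrange to obtain
\begin{equation*}
    \frac{u(t, r(t)) - u(t-h, r(t))}{-h} = \frac{u(t-h, r(t)) - u(t-h, r(t-h))}{h} - \frac{1}{h}\int_{t-h}^t \bigl[L(r, \dot r) + F(r, \gamma_T^s[X])\bigr] \, ds.
\end{equation*}
Taylor's Theorem in the spatial variable expresses the first quotient as
\begin{equation*}
    \int_0^1 \left\langle \nabla_q u\bigl(t-h, r(t-h) + s(r(t) - r(t-h))\bigr), \tfrac{r(t) - r(t-h)}{h} \right\rangle ds,
\end{equation*}
which, by the continuity of $\nabla_q u$ from Theorem \ref{first_derivative_q_u} and the fact that $\tfrac{r(t) - r(t-h)}{h} \to \dot r(t)$, converges to $\langle \nabla_q u(t, q), \dot r(t) \rangle$. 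The Fundamental Theorem of Calculus yields $-L(q, \dot r(t)) - F(q, \gamma_T^t[X])$ for the limit of the integral term, giving
\begin{equation*}
    \partial_t u(t, q) = \langle \nabla_q u(t, q), \dot r(t) \rangle - L(q, \dot r(t)) - F(q, \gamma_T^t[X]),
\end{equation*}
which establishes existence of $\partial_t u$.

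For continuity of $\partial_t u$, every term on the right is continuous in $(t,q)$: $L$ and $F$ are of class $C^3$; $\nabla_q u$ is continuous by Theorem \ref{first_derivative_q_u}; and $t \mapsto \gamma_T^t[X]$ is continuous. The potentially delicate term $\dot r(t)$ is handled via Claim \ref{u_hje_claim_1}, which gives $\nabla_q u(t, r(t)) = \nabla_v L(r(t), \dot r(t))$; inverting using the Legendre duality $\nabla_v L(q, \cdot)^{-1} = \nabla_p H(q, \cdot)$ yields $\dot r(t) = \nabla_p H(q, \nabla_q u(t, q))$, which is continuous. The main obstacle is precisely this step: avoiding a direct continuity argument for the terminal velocity of the minimizing path as $(t, q)$ varies. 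Expressing $\dot r(t)$ entirely in terms of $\nabla_q u(t, q)$ through the Hamiltonian circumvents that difficulty cleanly.
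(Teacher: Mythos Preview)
Your proof is correct and follows essentially the same route as the paper: establish the dynamic-programming identity $u(t,r(t)) - u(t-h,r(t-h)) = \int_{t-h}^t [L + F]$, add and subtract $u(t-h,r(t))$, pass to the limit using Taylor's theorem and the continuity of $\nabla_q u$, and read off $\partial_t u$ as a composition of continuous functions. If anything, you are more careful than the paper in two places: you explicitly justify the key identity via the Bellman principle and the Markov property $\gamma_t^s[\gamma_T^t[X]] = \gamma_T^s[X]$, and you spell out why $\dot r(t)$ depends continuously on $(t,q)$ by inverting Claim~\ref{u_hje_claim_1} through the Legendre duality to get $\dot r(t) = \nabla_p H(q,\nabla_q u(t,q))$, whereas the paper simply asserts continuity of the right-hand side.
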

\begin{proof}
    First we note that, if $q = X(\omega)$, for $r(s) = \gamma_t^s[X](\omega)$, $r(t) = q$ and thus
    \begin{equation*}
        u(t, r(t)) - u(t - h, r(t - h)) = \int_{t - h}^t L(r, \dot{r}) + F(r, \gamma_T^\tau[X]) \, d\tau
    \end{equation*}
    
    Our goal is to show that the following limit exists:
    \begin{equation*}
        \lim_{h \to 0} \frac{u(t, r(t)) - u(t - h, r(t))}{-h}
    \end{equation*}
    
    We note that by adding and subtracting $u(t - h, r(t))$
    \begin{equation*}
        \left[u(t, r(t)) - u(t - h, r(t)\right] + \left[u(t - h, r(t)) - u(t - h, r(t - h))\right] = \int_{t - h}^t L(r, \dot{r}) + F(r, \gamma_T^\tau[X]) \, d\tau
    \end{equation*}
    
    We divide by $-h$ on both sides, rearrange and obtain:
    \begin{equation}
        \label{u_difference_quotient}
        \frac{u(t, r(t)) - u(t - h, r(t)}{-h} = \frac{u(t - h, r(t)) - u(t - h, r(t - h))}{h} - \frac{1}{h} \int_{t - h}^t L(r, \dot{r}) + F(r, \gamma_T^\tau[X]) \, d\tau
    \end{equation}
    
    Using Taylor's Theorem,
    \begin{multline*}
        \frac{u(t - h, r(t)) - u(t - h, r(t - h))}{h} = \int_0^1 \biggl\langle \nabla_q u(t - h, r(t - h) + s(r(t) - r(t - h))), \\
        \frac{r(t) - r(t - h)}{h} \biggr\rangle \, ds
    \end{multline*}
    
    In which, if $h \to 0$, by the continuity of $\nabla_q u$ we have that
    \begin{align*}
        \lim_{h \to 0} \frac{u(t - h, r(t)) - u(t - h, r(t - h))}{h} &= \int_0^1 \langle \nabla_q u(t, r(t)), \dot{r}(t) \rangle \, ds \\
        &= \langle \nabla_q u(t, r(t)), \dot{r}(t) \rangle
    \end{align*}
    
    And also note
    \begin{equation*}
        \lim_{h \to 0} -\frac{1}{h} \int_{t - h}^t L(r, \dot{r}) + F(r, \gamma_T^\tau[X]) \, d\tau = -L(r(t), \dot{r}(t)) - F(r(t), \gamma_T^t[X])
    \end{equation*}
    
    Hence, letting $h \to 0$ in (\ref{u_difference_quotient}) gives us
    \begin{equation*}
        \lim_{h \to 0} \frac{u(t, r(t)) - u(t - h, r(t))}{-h} = \langle \nabla_q u(t, r(t)), \dot{r}(t) \rangle - L(r(t), \dot{r}(t)) - F(r(t), \gamma_T^t[X])
    \end{equation*}
    
    Hence we conclude the limit exists. By definition, this limit simply differentiates with respect to time, and thus we have that $\partial_t u$ exists. Furthermore, it is defined as a composition of continuous functions, therefore it is continuous, which concludes the proof.
\end{proof}

\subsection{The Hamilton--Jacobi Equation for \texorpdfstring{$u$}{u}}

We now proceed to demonstrate that $u$ satisfies the Hamilton--Jacobi Equation. 

\begin{proposition}
    \label{u_hje}
    Given the existence of $\partial_t u(t,q), \nabla_q u(t,q) \in C$, we have
    \begin{equation*}
        \partial_t u(t,q) + H(q, \nabla_q u(t,q)) - \int_{\Omega} \Phi(q - \gamma_T^t[X](\Tilde{\omega})) \, d\Tilde{\omega} = 0
    \end{equation*}
    
    With the boundary condition
    \begin{equation*}
        u(0,q) = \int_{\Omega} \Psi(q - \gamma_T^0[X](\tilde{\omega})) \, d\tilde{\omega}
    \end{equation*}
\end{proposition}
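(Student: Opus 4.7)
The plan is to transcribe the proof of Proposition \ref{uhat_hje} to the individual-player setting, exploiting that the Euler--Lagrange equation in Proposition \ref{euler_lagrange_thm_u} is structurally identical to the one for $\hat{U}$. Fix $\omega \in \Omega$ with $q = \gamma_T^t[X](\omega)$, and let $r(s) = \gamma_T^s[X](\omega)$, so that $r(t) = q$ and $r$ is (by the remark at the end of the proof of Theorem \ref{exist_littleuhat_thm}) the unique minimizer achieving $\hat{u}(t, q, \gamma_T^t[X])$. Set $Z(s) = \nabla_v L(r(s), \dot{r}(s))$; by Lemma \ref{uhat_hje_lemma_2} this is equivalent to $\dot{r}(s) = \nabla_p H(r(s), Z(s))$ together with the Legendre identity $L(r, \dot{r}) + H(r, Z) = \langle \dot{r}, Z \rangle$, and by Claim \ref{u_hje_claim_1} we may identify $Z(s) = \nabla_q u(s, r(s))$ along the optimal trajectory.

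The next step is to record, for any admissible competitor $\bar{r}$ with $\bar{r}(s) = r(s)$, the inequality
\begin{equation*}
    u(s, \bar{r}(s)) \leq \int_0^s L(\bar{r}, \dot{\bar{r}}) + F(\bar{r}, \gamma_T^\tau[X]) \, d\tau + G(\bar{r}(0), \gamma_T^0[X]),
\end{equation*}
which follows from the definition of $\hat{u}$ as an infimum together with the identity $u(s,\cdot) = \hat{u}(s,\cdot,\gamma_T^s[X])$; here one must implicitly appeal to the mean-field dynamic programming principle $\gamma_s^\tau[\gamma_T^s[X]] = \gamma_T^\tau[X]$ for $\tau \in [0,s]$, which ensures that the restricted problem is governed by the truncated reference field. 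Plugging in $\bar{r} = r$ turns this inequality into an equality, so
\begin{equation*}
    f(s) := u(s, r(s)) - \int_0^s L(r, \dot{r}) + F(r, \gamma_T^\tau[X]) \, d\tau - G(r(0), \gamma_T^0[X]) \equiv 0
\end{equation*}
on $[0,T]$. Differentiating via the Chain Rule, which is legitimate by Theorems \ref{first_derivative_q_u} and \ref{first_derivative_t_u}, the identity $f'(s) \equiv 0$ yields
\begin{equation*}
    \partial_s u(s, r(s)) + \langle \nabla_q u(s, r(s)), \dot{r}(s) \rangle - L(r(s), \dot{r}(s)) - F(r(s), \gamma_T^s[X]) = 0.
\end{equation*}

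Substituting $\nabla_q u(s, r(s)) = Z(s)$ and applying the Legendre identity $\langle \dot{r}, Z \rangle - L(r, \dot{r}) = H(r, Z)$ rewrites the above as $\partial_s u(s, r(s)) + H(r(s), \nabla_q u(s, r(s))) = F(r(s), \gamma_T^s[X])$, and evaluating at $s = t$ (using $r(t) = q$ and unpacking $F$) produces the Hamilton--Jacobi equation as stated. For the boundary condition, note that at $t = 0$ the infimum in $\hat{u}(0, q, Y)$ is taken over the single admissible constant path on the degenerate interval $[0,0]$, so $\hat{u}(0, q, Y) = G(q, Y)$; hence $u(0, q) = G(q, \gamma_T^0[X]) = \int_{\Omega} \Psi(q - \gamma_T^0[X](\tilde{\omega})) \, d\tilde{\omega}$. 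The main obstacle is justifying $f(s) \equiv 0$ cleanly: the restriction of the global mean-field minimizer must be shown to coincide with the minimizer of the restricted individual problem, despite the reference field $\gamma_T^\cdot[X]$ depending parametrically on $s$. This is precisely the dynamic programming principle in this mean-field setting, and once granted, the rest of the argument is a direct adaptation of the proof of Proposition \ref{uhat_hje}.
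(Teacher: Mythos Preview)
Your proposal is correct and follows essentially the same route as the paper: define a function along the optimal trajectory that is identically zero, differentiate, and then use Claim~\ref{u_hje_claim_1} together with the Legendre identity (Lemma~\ref{uhat_hje_lemma_2}) to convert $\langle \nabla_q u, \dot r\rangle - L(r,\dot r)$ into $H(r,\nabla_q u)$, evaluating at $s=t$. Your explicit flagging of the dynamic programming principle needed for $f(s)\equiv 0$ is in fact more careful than the paper's own treatment, which asserts the analogous identity $a(\tau)=0$ without further comment.
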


\begin{proof}
    From Proposition \ref{euler_lagrange_thm_u}, we may consider the Euler--Lagrange Equation of minimizer $s \mapsto \gamma_t^s[X]$ achieving minimum $u(t,q)$:
    \begin{equation}
        \label{euler_lagrange_eq_u}
        \frac{d}{ds} \nabla_v L(\gamma_t^s[X](\omega), \dot{\gamma}_t^s[X](\omega)) = \nabla_q L(\gamma_t^s[X](\omega), \dot{\gamma}_t^s[X](\omega)) + \int_{\Omega} \nabla \Phi(\gamma_t^s[X](\omega) - \gamma_T^s[X](\Tilde{\omega})) \, d\Tilde{\omega}
    \end{equation}

    Again, we note that $\nabla_v L(q, \cdot)$ and $\nabla_p H(q, \cdot)$ are inverses of each other. Set 
    \begin{equation}
        \label{eqn_z_hsys_u}
        Z(s, \omega) = \nabla_v L(\gamma_t^s[X](\omega), \dot{\gamma}_t^s[X](\omega))
    \end{equation}
    
    Giving us
    \begin{equation}
        \label{eqn_gammadot_hsys_u}
        \dot{\gamma}_t^s[X](\omega) = \nabla_p H(\gamma_t^s[X](\omega), Z(s,\omega))
    \end{equation}
    
    Recall Lemma \ref{uhat_hje_lemma_2}, and notice (\ref{eqn_z_hsys_u}) implies
    \begin{equation*}
        L(\gamma_t^s[X](\omega), \dot{\gamma}_t^s[X](\omega)) + H(\gamma_t^s[X](\omega), Z(s,\omega)) = \langle \dot{\gamma}_t^s[X](\omega), Z(s, \omega) \rangle
    \end{equation*}
    
    Which, when combined with Lemma \ref{uhat_hje_lemma_1}, yields
    \begin{equation}
        \label{lagrange_to_hamiltonian_u}
        \nabla_q L(\gamma_t^s[X](\omega), \dot{\gamma}_t^s[X](\omega)) = -\nabla_q H(\gamma_t^s[X](\omega), Z(s,\omega))
    \end{equation}
    
    Combining (\ref{euler_lagrange_eq_u}), (\ref{eqn_z_hsys_u}), (\ref{eqn_gammadot_hsys_u}), and (\ref{lagrange_to_hamiltonian_u}), we have the Hamiltonian system
    \begin{equation*}
        \begin{dcases}
            \dot{Z}(s, \omega) = -\nabla_q H(\gamma_t^s[X](\omega), Z(s, \omega)) + \int_{\Omega} \nabla \Phi(\gamma_t^s[X](\omega) - \gamma_T^s[X](\tilde{\omega})) \, d\tilde{\omega} \\
            \dot{\gamma}_t^s[X](\omega) = \nabla_p H(\gamma_t^s[X](\omega), Z(s, \omega))
        \end{dcases} 
    \end{equation*}
    
    Now note that (for $\bar{r}(t) = q$)
    \begin{equation*}
        u(t, \bar{r}(t)) \leq \int_0^t L(\bar{r}(s),  \dot{\bar{r}}(s)) + F(\bar{r}(s), \gamma_T^s[X]) \, ds + G(0, \gamma_T^0[X]) \quad \forall \bar{r}
    \end{equation*}
    
    Rearranging,
    \begin{equation*}
        u(t, \bar{r}(t)) - \int_0^t L(\bar{r}(s),  \dot{\bar{r}}(s)) + F(\bar{r}(s), \gamma_T^s[X]) \, ds - G(0, \gamma_T^0[X]) \leq 0 \quad \forall \bar{r}
    \end{equation*}
    
    Let $\bar{r} = r$ be optimal in $u$ ($r(s) = \gamma_t^s[X](\omega)$). Thus,
    \begin{equation*}
        a(\tau) \overset{\text{set}}{=} u(\tau, r(\tau)) - \int_0^{\tau} L(r(s), \dot{r}(s)) + F(r(s), \gamma_T^s[X]) \, ds - G(0, \gamma_T^0[X]) = 0
    \end{equation*}
    
    Therefore $a'(\tau) = 0$ and
    \begin{equation}
        \label{derivative_a}
        \partial_\tau u(\tau, r(\tau)) + \langle \nabla_q u(\tau, r(\tau)), \dot{r}(\tau) \rangle - L(r(\tau), \dot{r}(\tau)) - F(r(\tau), \gamma_T^{\tau}[X]) = 0
    \end{equation}
    
    Combining (\ref{derivative_a}), Lemma \ref{uhat_hje_lemma_2}, and Claim \ref{u_hje_claim_1} we conclude
    \begin{equation*}
        \partial_{\tau} u(\tau, r(\tau)) + H(r(\tau), \nabla_q u(\tau, r(\tau)) - F(r(\tau), \gamma_T^{\tau}[X]) = 0 \quad \forall \tau
    \end{equation*}
    
    Set $\tau = t$. Then $r(t) = q$ and we arrive at the Hamilton--Jacobi Equation for $u(t,q)$:
    \begin{equation*}
        \partial_t u(t,q) + H(q, \nabla_q u(t,q)) - \int_{\Omega} \Phi(q - \gamma_T^t[X](\tilde{\omega})) \, d\tilde{\omega} = 0
    \end{equation*}
    
    We simply plug in $t = 0$ into $u$ to obtain the boundary condition
    \begin{equation*}
        u(0,q) = G(r(0), \gamma_T^0[X]) = \int_{\Omega} \Psi(q - \gamma_T^0[X](\tilde{\omega})) \, d\tilde{\omega} \qedhere
    \end{equation*}
\end{proof}
\section{Conclusion}

Thus, as we demonstrated the existence of $u: [0,T] \times \mathbb{R}^d \to \mathbb{R}$, $\gamma: [0,T] \times \Omega \to \mathbb{R}^d$ of class $C^1$ with $\nabla_{qq} u$ bounded such that
\begin{equation*}
    \begin{dcases}
        \partial_t u(t,q) + H(q, \nabla_q u(t,q)) - \int_{\Omega} \Phi(q - \gamma_T^t[X](\tilde{\omega})) \, d\tilde{\omega} = 0 \\
        \dot{\gamma}_T^t[X](\omega) = \nabla_p H(\gamma_T^t[X](\omega), \nabla_q u(t, \gamma_T^t[X](\omega)) \\
        u(0,q) = \int_{\Omega} \Psi(q - \gamma_T^0[X](\tilde{\omega})) \, d\tilde{\omega} \\
        \gamma_T^T[X](\omega) = X(\omega)
    \end{dcases}
\end{equation*}

As we assumed $X$ is non-atomic, by Theorem \ref{primary_thm} we conclude that there exists a unique Nash Equilibrium.
\printbibliography

@misc{https://doi.org/10.48550/arxiv.2004.01660,
  doi = {10.48550/ARXIV.2004.01660},
  url = {https://arxiv.org/abs/2004.01660},
  author = {Gangbo, Wilfrid and Mészáros, Alpár R.},
  keywords = {Analysis of PDEs (math.AP), Functional Analysis (math.FA), FOS: Mathematics, FOS: Mathematics},
  title = {Global well-posedness of Master equations for deterministic displacement convex potential mean field games},
  publisher = {arXiv},
  year = {2020},
  copyright = {arXiv.org perpetual, non-exclusive license}
}

@article{gangbo2015existence,
  title={Existence of a solution to an equation arising from the theory of mean field games},
  author={Gangbo, Wilfrid and {\'S}wi{\k{e}}ch, Andrzej},
  journal={Journal of Differential equations},
  volume={259},
  number={11},
  pages={6573--6643},
  year={2015},
  publisher={Elsevier}
}

@article{mayorga2020short,
  title={Short time solution to the master equation of a first order mean field game},
  author={Mayorga, Sergio},
  journal={Journal of Differential Equations},
  volume={268},
  number={10},
  pages={6251--6318},
  year={2020},
  publisher={Elsevier}
}

@techreport{cardaliaguet2010notes,
  title={Notes on mean field games},
  author={Cardaliaguet, Pierre},
  year={2010},
  institution={Technical report}
}

@article{lasry2007mean,
  title={Mean field games},
  author={Lasry, Jean-Michel and Lions, Pierre-Louis},
  journal={Japanese journal of mathematics},
  volume={2},
  number={1},
  pages={229--260},
  year={2007},
  publisher={Springer}
}

@incollection{gueant2011mean,
  title={Mean field games and applications},
  author={Gu{\'e}ant, Olivier and Lasry, Jean-Michel and Lions, Pierre-Louis},
  booktitle={Paris-Princeton lectures on mathematical finance 2010},
  pages={205--266},
  year={2011},
  publisher={Springer}
}

\end{document}